\newtheorem{theorem}{Theorem}[section]
\theoremstyle{definition}
\newtheorem{remark}{Remark}[section]
\newcommand{\abs}[1]{\left\vert#1\right\vert}
\newcommand{\RR}{\mathbb{R}}
\newcommand{\norm}[1]{\left\Vert#1\right\Vert}
\newtheorem{proposition}{Proposition}[section]
\begin{document}
\title{A $k$-Hessian equation with a power nonlinearity source and self-similarity}

\author{Justino S\'{a}nchez}
\date{}
\maketitle
\begin{center}
Departamento de Matem\'{a}ticas, Universidad de La Serena\\
 Avenida Cisternas 1200, La Serena, Chile.
\\email: jsanchez@userena.cl
\end{center}

\begin{abstract}
We study existence and uniqueness of spherically symmetric solutions of 
\begin{equation*}
S_k(D^2v)+\beta \xi\cdot\nabla v+\alpha v+\abs{v}^{q-1}v=0\;\; \mbox{in}\;\; \mathbb{R}^n,
\end{equation*}
where $\alpha,\beta$ are real parameters, $n>2,\, q>k\geq 1$ and $S_k(D^2v)$ stands for the $k$-Hessian operator of $v$. Our results are based mainly on the analysis of an associated dynamical system and energy methods. We derive some properties of the solutions of the above equation for different ranges of the parameters $\alpha$ and $\beta$. In particular, we describe with precision its asymptotic behavior at infinity. Further, according to the position of $q$ with respect to the first critical exponent $\frac{(n+2)k}{n}$ and the Tso critical exponent $\frac{(n+2)k}{n-2k}$ we study the existence of three classes of solutions: crossing, slow decay or fast decay solutions. In particular, if $k>1$ all the fast decay solutions have a compact support in $\RR^n$. The results also apply to construct self-similar solutions of type I to a related nonlinear evolution equation. These are self-similar functions of the form $u(t,x)=t^{-\alpha}v(xt^{-\beta})$ with suitable $\alpha$ and $\beta$. 
\end{abstract}

\noindent Keywords: $k$-Hessian, existence, uniqueness, asymptotic behavior, self-similarity.

\noindent Mathematics Subject Classification: 34A34, 35A01, 35B07, 35C06, 35J60.

\section{Introduction and main results} 
Our main goal is to determine existence, uniqueness and main properties of the radial solutions of the governing equation
\begin{equation}\label{eq:maineqv}
S_k(D^2v)+\beta\xi\cdot\nabla v+\alpha v+\abs{v}^{q-1}v=0\;\; \mbox{in}\;\; \mathbb{R}^n,
\end{equation}
where $n>2,\, q>k,\,\alpha,\,\beta$ are real parameters, $\xi\cdot\nabla=\sum_{i=1}^{n}\xi_i\partial/\partial\xi_i$ and $S_k(D^2v)$ is the $k$-Hessian operator of $v$. This operator is defined as the $k$-th elementary symmetric polynomial of eigenvalues of the Hessian matrix. Equivalently, $S_k(D^2v)$ is the sum of the $k\times k$ principal minors of the Hessian matrix $D^2v$. This class of operators contains in particular the Laplace operator $S_1(D^2 u)=\Delta u$ and the Monge-Amp\`{e}re operator $S_n(D^2 u)=\mbox{det}\,D^2 u$. When $k\geq 2$, the $k$-Hessian operators are fully nonlinear. They are elliptic restricted to a subset of twice continuously differentiable functions, which are called $k$-{\it admissible} or $k$-{\it convex} functions. Equations involving $k$-Hessian operators has many applications in geometry, optimization theory and in other related fields. For more information about these operators, see for instance \cite{Wang94, Wang09}.

We point out that an interesting case of equation \eqref{eq:maineqv} was studied by Haraux and Weissler \cite{HaWe82} when $k=1,\; \beta=1/2,\;\alpha=1/(\gamma-1)$ and $q=\gamma$. In this case, \eqref{eq:maineqv} reduces to
\[
\Delta v+\frac{1}{2}\,\xi\cdot\nabla v+\frac{1}{\gamma-1}\,v+\abs{v}^{\gamma-1}v=0\;\; \mbox{in}\;\; \mathbb{R}^n.
\] 
Assuming radial symmetry, this equation can be written as
\begin{equation}\label{eq:Lapla}
v''(r)+\left(\frac{n-1}{r}+\frac{r}{2}\right)v'(r)+\frac{v(r)}{\gamma-1}+\abs{v(r)}^{\gamma-1}v(r)=0,\;\; r=\abs{\xi}>0,
\end{equation}
which has been analyzed extensively in \cite{PeTeWe86, Qi98, Weissler85, WeisslerFB85}. 

In particular, in \cite{HaWe82} among others results, the following results were obtained: Let $\gamma>1$ and $n\geq 1$. For all real number $v_0$ there is a unique function $v$ of class $C^2$ for $r\geq 0$ with $v(0)=v_0$ and $v'(0)=0$, which satisfies \eqref{eq:Lapla} for all $r>0$. Moreover, $\lim_{r\rightarrow\infty}r^{2/(\gamma-1)}v(r)=L$ always exists and is finite. Furthermore:
\begin{itemize}
\item [$(i)$] If $n(\gamma-1)/2\leq 1$, then $v(r)=0$ for at least one value of $r>0$.

\item [$(ii)$] If $1<n(\gamma-1)/2<\gamma+1$, then for sufficiently small $v_0>0,\, v(r)>0$ for all $r>0$, and $L>0$; and for at least some $v_0>0$ there is an $r>0$ such that $v(r)=0$. Let $v_{\inf}=\inf \{v_0>0: v(r)=0\; \mbox{for some } r>0\}$. If $v_0=v_{\inf}$, then $v(r)>0$ for all $r>0$, and $L=0$.

\item [$(iii)$] If $\gamma+1\leq n(\gamma-1)/2$ and $v_0>0$, then $v(r)>0$ for all $r>0$, and $L>0$.
\end{itemize}
As an application of these results, the authors in \cite{HaWe82} use the function $v$ to construct solutions of the parabolic partial differential equation $u_t=\Delta u+\abs{u}^{\gamma-1}u$ of the special form $u(t,x)=t^{-1/(\gamma-1)}v(t^{-1/2}\abs{x})$. 

An extension of the results of Haraux-Weissler to a more general equation involving the $p$-Laplacian operator was done by M. F. Bidaut-V\'{e}ron \cite{BV06}. Several results were obtained, which include a complete description of the behavior of the solutions near infinity, the existence of slow decay solutions, fast decay solutions (positive or changing sign), solutions with compact support and oscillatory solutions are also studied. Thus some results obtained in \cite{HaWe82}, \cite{Qi98}, \cite{Weissler85} were generalized and improved.

As we will see the governing equation \eqref{eq:maineqv} is also related to the study of special solutions of the evolution equation with a source term of power type
\begin{equation}\label{eq:k-evol}
u_t=S_{k}(D^2 u)+\abs{u}^{q-1}u\;\; \mbox{in}\;\; (0,T)\times\mathbb{R}^n.
\end{equation}
Here $u = u(t, x)\in\mathbb{R},\; 0<T\leq\infty$ is the existence time of $u$ and we sometimes write $u(t)$ for the spatial function $u(t,\cdot)$. We note that usually, when dealing with evolution equations with $k$-Hessian operators, some natural restrictions are often imposed on the solution $u$, such as convexity in $x$, and monotonicity in $t$. Such restrictions are typical of Cauchy problems on bounded domains with homogeneous Dirichlet boundary conditions. However, since we are interested in special solutions of \eqref{eq:k-evol} that are defined on the entire space, we do not impose such restrictions in this work. 

Note that, for $\lambda>0$, the scaling group
\[
t\rightarrow \lambda^{2k(q-1)/(q-k)}t,\;  x\rightarrow \lambda x,\; u\rightarrow \lambda^{2k/(q-k)}u
\]
preserves solutions of \eqref{eq:k-evol}. Solutions which are invariant under rescaling are called {\it self-similar}.

The configuration of the parameters $\alpha$ and $\beta$ in \eqref{eq:maineqv} permit us to construct self-similar solutions of equation \eqref{eq:k-evol}, for example, self-similar solutions of type I, i.e of the form $u(t,x)=t^{-\alpha}v(xt^{-\beta})$. For these solutions $T=\infty$ in \eqref{eq:k-evol}, so they are global (in time) solutions. The exponents $\alpha$ and $\beta$ satisfies the relation $(k-1)\alpha+2k\beta=1$.
We take $\alpha=\alpha_0=1/(q-1)$ and $\beta=\beta_0=(q-k)/2k(q-1)=((q-k)/2k)\alpha_0$, thus $u(t,x)=t^{-1/(q-1)}v(xt^{-(q-k)/2k(q-1)})$.
The function $v(\xi)=v(x)=u(1,x)$ is called the {\it profile} of the solution $u$ and is a solution of \eqref{eq:maineqv} with $q>k\geq 1$. 

A particular case of equation \eqref{eq:k-evol} is related to one of the models studied by Budd and Galaktionov in \cite{BuGa13}, namely, the Monge-Amp\`{e}re flow generated by the fully nonlinear PDE
\begin{equation}\label{eq.M-Aflow}
u_t=(-1)^{d-1}\abs{D^2u}+\abs{u}^{p-1}u\;\; \mbox{in}\;\; (0,\infty)\times\RR^d,
\end{equation}
where $\abs{D^2u}\equiv \mbox{det}\,D^2 u$ and $p>1$ is a given constant. The scalar factor in front of the main operator ensures local well-posedness (local parabolicity) of the partial differential equation. We point out that many of the results in \cite{BuGa13} are based on the existence of self-similar solutions. Thus the study of such solutions is relevant. In fact, the computational and analytical evidence shows that the phenomenon of finite time blow up that exhibited equation \eqref{eq.M-Aflow} is described by self-similar solutions. Notice that for odd dimensions $d$, equation \eqref{eq.M-Aflow} coincides with equation \eqref{eq:k-evol} when $d=k=n$ and $p=q$. Nonlinear equations involving Monge-Amp\`{e}re operators arise in many problems related to, for example, optimal transport and geometric flows, image registration and the evolution of vorticity in meteorological systems. The reader is suggested to consult \cite{BuGa13} for an extensive list of references on these applications. 

Very recently \cite{Sanchez25}, we studied the equation \eqref{eq:maineqv} without the nonlinear term $\abs{u}^{q-1}u$. As an application, we constructed several explicit families of solutions in self-similar forms of a homogeneous version of equation \eqref{eq:k-evol}. Some of these families include self-similar solutions of type I. For the reader's convenience, we recall the explicit form of these solutions:
\begin{equation}\label{eq:k-Baren}
U_C(t,x)=t^{-\alpha}\left(C-\gamma\abs{x}^2 t^{-2\beta}\right)_{+}^\frac{k}{k-1},
\end{equation}
where $(s)_+=\max\{s,0\}, C>0$ is a free constant, and $\alpha, \beta$ and $\gamma$ have explicit values, namely
\[
\alpha=\frac{n}{n(k-1)+2k},\;\;\;\; \beta=\frac{1}{n(k-1)+2k},\;\;\;\; \gamma=\frac{k-1}{2k}\left(\frac{\beta}{c_{n,k}}\right)^\frac{1}{k},\;\;\;\; c_{n,k}=\frac{1}{n}\binom{n}{k}.
\]
The full family of solutions $U_C$ ($k>1$) appears for the first time in \cite{Sanchez20}. Note that these solutions exhibit several common properties which have been previously observed in self-similar solutions of PDEs with classic reaction-diffusion, e.g. the porous medium equation $u_t=\Delta (u^m)\; (m>1)$ and the $p$-Laplacian evolution equation $u_t=\mbox{div}(\abs{\nabla u}^{p-2}\nabla u)$ when $p>2$. Some of these properties, such as compact support in space for every fixed time, conservation of mass (i.e., $\int_{\RR^n}\abs{u}(t,x)dx$ is a constant function of time) and  finite speed of propagation, are reminiscent of the properties of the famous family of Barenblatt solutions to the porous media equation. We refer the reader to \cite{Vazquez07} and related references therein for further details.

In this paper we study the full equation \eqref{eq:maineqv} which includes a polynomial type source term that was not previously considered in \cite{Sanchez20, Sanchez25} and, up to our knowledge, it is the first time that self-similar solutions of type I are studied for this model equation. To do that, we follow the approach of \cite{BV06} by means of energy functions, scaling techniques and analysis of systems of ODEs. A major technical difficulty, however, is obtaining adequate energy functions for equation \eqref{eq:rad_kappa_0}. To save the method, we need to introduce a change of variables that reduces the problem to the study of an equivalent equation. In this new scenario many of the arguments used in \cite{BV06} apply very easily to the new equation. Nevertheless, some energy functions do not work properly. Consequently, further elaboration is necessary including some modifications and the study of both equations in parallel. For this, we choose suitable parameters to define new energy functions. 

To establish our results, we need the values:
\[
q_c(k):=\frac{(n+2)k}{n}\;\; \mbox{and }\; q^\ast(k):=\begin{cases}
\frac{(n+2)k}{n-2k},\; 1\leq k<\frac{n}{2}\\
\infty,\; \frac{n}{2}\leq k\leq n.
\end{cases}
\]
The first value coincides when $k=1$ with the {\it Fujita exponent}, $q_F=1+\frac{2}{n}$, which was introduced in the pioneering paper of H. Fujita \cite{Fujita66}. In the semilinear case, this critical exponent appear in the study of nonnegative solutions of \eqref{eq:k-evol} subject to an initial datum $u(0)$. This value separates the range of exponents $q$ for which a solution blow up or exist for all times in some appropriate sense, depending on a small or large datum. We give some further comments on the value $q_c(k)$. A well known heuristic procedure that uses scaling arguments to predict the correct value of $q_F$ for a large number of nonlinear equations is to balance the decay rates of solutions of specific equations. In our setting this correspond to equalize the rate of decay $t^{-\frac{n}{n(k-1)+2k}}$ of the self-similar solution \eqref{eq:k-Baren} and the rate of decay of the solution of $u_t=u^q$ given by $C(T-t)^{-\frac{1}{q-1}}$; in other words we have the relation
\[
\frac{n}{n(k-1)+2k}=\frac{1}{q-1}\Longleftrightarrow q=\frac{(n+2)k}{n}=q_c(k).
\]

It should be noted that while the above procedure gives correct results in predicting Fujita critical exponents for a large number of nonlinear equations, it is not the rule. See the example given by Cazenave et al. \cite{CaDiWe08} where a finite Fujita critical exponent is not given by scaling. Nevertheless, it is interesting to note that the exponent $q_c(k)$ is the same as the Fujita critical exponent for a Monge-Amp\`{e}re equation with radial structure, the equation \eqref{eq.M-Aflow} above (see \cite[Section 2]{BuGa13} for more details about the model). A natural question arise: there is others $k$-Hessian equations for which $q_c(k)$ is the Fujita critical exponent?. Now, as is well known, the second value, $q^*(k)$, corresponds to the critical exponent for the $k$-Hessian operator on a ball introduced by Tso in \cite{Tso90}.

On the other hand, for radial solutions $v(\xi)=v(r),\,r=\abs{\xi}>0$, equation \eqref{eq:maineqv} translates into the nonlinear differential equation 
\begin{equation}\label{eq:radnde}
c_{n,k}r^{1-n}(r^{n-k}(v')^k)'+\beta rv'+\alpha v+\abs{v}^{q-1}v=0\;\; \mbox{in }\; (0,\infty),
\end{equation}
where $c_{n,k}=\binom{n}{k}/n$ is a binomial coefficient. 

We restrict our attention to self-similar solutions of type I slightly modified. Namely
\[
u(t,x)=(\epsilon t)^{-\alpha_0}v((\epsilon t)^{-\beta_0}x).
\]
The dilation $\epsilon t$ of $t$ by the factor $\epsilon$ is introduced so that the two parameters $\alpha_0$ and $\beta_0$ are linked by a single parameter $\kappa_0$. In fact for radial profiles $v$, from equation \eqref{eq:radnde}, we have 
\begin{equation}\label{eq:rad_kappa_0}
(r^{n-k}(v')^k)'+r^{n-1}(rv'+\kappa_0 v+c_{n,k}^{-1}\abs{v}^{q-1}v)=0,
\end{equation}
where $r=(\epsilon t)^{-\beta_0}\abs{x},\;\epsilon=c_{n,k}/\beta_0$ and $\kappa_0=\alpha_0/\beta_0$. Due to the condition on these two parameters given by $(k-1)\alpha_0+2k\beta_0=1$, there is a bijection between $\kappa_0$ and the couple $(\alpha_0,\beta_0)$, so that the parameter values are easily recovered from $\kappa_0$.

The main consequence of the analysis of the solutions of equation \eqref{eq:maineqv} is a result concerning type I radially symmetric self-similar solutions to equation \eqref{eq:k-evol} stated in the following theorem. 
\begin{theorem}\label{beha_type_I_sol}
Suppose that $k$ is an odd integer greater or equal to 1. Let $q>k$. 
\begin{itemize}
\item [$(a)$] For any $\gamma>0$, there exists a unique self-similar solution of \eqref{eq:k-evol} of the form
\begin{equation}\label{eq:type_I}
u(t,x)=(c_{n,k}\beta_0^{-1}t)^{-\frac{1}{q-1}}v((c_{n,k}\beta_0^{-1}t)^{-\beta_0}\abs{x})
\end{equation}
such that $v\in C^2((0,\infty))\cap C^1([0,\infty)),\; v(0)=\gamma$ and $v'(0)=0$. Any profile $v$ of a solution of this form satisfies $\lim_{\abs{\xi}\rightarrow\infty}\abs{\xi}^{\kappa_0}v(\abs{\xi})=L\in\mathbb{R}$.

\item [$(b)$] If $q_c(k)<q$, there exists positive solutions with $L>0$, called slow decay solutions (slow solutions, for short).

\item [$(c)$] If $q_c(k)<q<q^\ast(k)$, there exists a nonnegative solution $v\not\equiv 0$ such that $L=0$, called fast decay solution (fast solution, for short), and 
\[
u(t)\in L^p(\mathbb{R}^n)\; \mbox{for any }\; p\geq 1,\; \lim_{t\rightarrow 0}\norm{u(t)}_p=0\; \mbox{whenever }\; p<\frac{n}{\kappa_0},
\]
\[
\lim_{t\rightarrow 0}\sup_{\abs{x}\geq\epsilon}\abs{u(t,x)}=0\;\; \mbox{for any }\; \epsilon>0.
\]
If $k>1$, $v$ has a compact support.

\item [$(d)$] If $q\geq q^\ast(k)$, all the solutions $v\not\equiv 0$ have a constant sign and are slow solutions.

\item [$(e)$] If $q\leq q_c(k)$, all the solutions $v\not\equiv 0$ assume both positive and negative values. These solutions are called crossing solutions. There exists an infinite number of fast solutions $v$ with compact support.
\end{itemize}
\end{theorem}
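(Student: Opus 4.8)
The plan is to reduce the whole statement to a study of the radial profile equation \eqref{eq:rad_kappa_0} and then transport the conclusions back to \eqref{eq:k-evol}. Since $k$ is odd, for a radial $C^2$ function $v$ one has $S_k(D^2v)=c_{n,k}r^{1-n}(r^{n-k}(v')^k)'$ with no $k$-admissibility restriction, so substituting the self-similar ansatz \eqref{eq:type_I} into \eqref{eq:k-evol} and using $\alpha_0=1/(q-1)$, $\beta_0=(q-k)/(2k(q-1))$ and $\kappa_0=\alpha_0/\beta_0$ shows that $u$ solves \eqref{eq:k-evol} on $(0,\infty)\times\RR^n$ if and only if the profile $v$ solves \eqref{eq:rad_kappa_0} on $(0,\infty)$. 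Thus every clause of Theorem \ref{beha_type_I_sol} becomes a clause about \eqref{eq:rad_kappa_0} with data $v(0)=\gamma>0$, $v'(0)=0$, and the critical exponents translate into thresholds for the single parameter $\kappa_0=2k/(q-k)$: one computes $q=q_c(k)$ iff $\kappa_0=n$, and (when $1\leq k<n/2$) $q=q^\ast(k)$ iff $\kappa_0=(n-2k)/(k+1)$, so $q>q_c(k)$ means $\kappa_0<n$ and $q\geq q^\ast(k)$ means $\kappa_0\leq(n-2k)/(k+1)$ (vacuous for $k\geq n/2$).

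For part $(a)$, local existence and uniqueness near the singular point $r=0$ would come from the integral reformulation $v'(r)=-r^{(k-n)/k}\bigl(\int_0^r s^{n-1}(sv'+\kappa_0v+c_{n,k}^{-1}\abs{v}^{q-1}v)\,ds\bigr)^{1/k}$, where the $k$-th root is single-valued because $k$ is odd, followed by a contraction argument on a short interval and then continuation; global existence on $[0,\infty)$ is secured by a priori bounds produced in the energy analysis. The existence of $L=\lim_{r\to\infty}r^{\kappa_0}v(r)$ is the structural core: one passes, via the change of variables announced in the introduction, to an equivalent equation for which a Lyapunov-type energy is monotone in the new independent variable, shows that the trajectory is bounded, and deduces convergence of $r^{\kappa_0}v(r)$; a separate argument excludes a logarithmic borderline behavior and shows $L$ is finite.

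Parts $(b)$–$(e)$ are then the energy / phase-plane classification. Rewriting \eqref{eq:rad_kappa_0} in self-similar variables (say $s=\log r$, $w=r^{\kappa_0}v$, together with a companion variable for $r^{n-k}(v')^k$) produces an associated planar dynamical system whose rest points, with their local stable and unstable manifolds, encode the admissible behaviors of $w$ at $s=\pm\infty$. When $q>q_c(k)$, i.e.\ $\kappa_0<n$, the relevant rest point attracts trajectories so that small $\gamma>0$ yields globally positive profiles with $w\to L>0$, giving the slow solutions of $(b)$; a shooting argument in $\gamma$, in the spirit of the number $v_{\inf}$ of Haraux--Weissler, then isolates a $\gamma$ with $L=0$, which is the fast solution of $(c)$, provided $q<q^\ast(k)$ so that the fast (stable) manifold is nontrivial. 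For $q\geq q^\ast(k)$ the sign of the energy forces every nontrivial profile to keep a constant sign and to be slow, which is $(d)$; for $q\leq q_c(k)$ the energy flux changes the picture and every nontrivial profile must oscillate, while a count of connections to the rest point at infinity yields infinitely many fast profiles, which have compact support (the assertion of $(c)$ and $(e)$ when $k>1$). The compact-support statements reflect the degeneracy of $(v')^k$ for $k>1$: once a profile reaches $0$ with vanishing derivative, the zero function is an admissible continuation, and the energy estimates show that a fast profile reaches such a state at finite $r$; for $k=1$ the non-degenerate principal part instead forces Gaussian-type decay. Finally the $L^p$ and small-time statements of $(c)$ are direct: the substitution $y=(c_{n,k}\beta_0^{-1}t)^{-\beta_0}x$ gives $\norm{u(t)}_p=(c_{n,k}\beta_0^{-1}t)^{\beta_0(n/p-\kappa_0)}\norm{v}_p$, finite for all $p\geq1$ since $v$ is bounded with compact support ($k>1$) or rapidly decaying ($k=1$), and this tends to $0$ as $t\to0$ exactly when $p<n/\kappa_0$ because $\alpha_0=\kappa_0\beta_0$ makes the exponent positive; likewise $\sup_{\abs{x}\geq\epsilon}\abs{u(t,x)}=(c_{n,k}\beta_0^{-1}t)^{-\alpha_0}\sup_{r\geq(c_{n,k}\beta_0^{-1}t)^{-\beta_0}\epsilon}\abs{v(r)}\leq\epsilon^{-\kappa_0}\sup_{r\geq(c_{n,k}\beta_0^{-1}t)^{-\beta_0}\epsilon}r^{\kappa_0}\abs{v(r)}\to0$ as $t\to0$, since $r^{\kappa_0}v(r)\to L=0$ and the lower limit of the supremum tends to $\infty$.

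I expect the main obstacle to be exactly the one flagged in the introduction: producing energy functions that actually work for the fully nonlinear, and for $k>1$ degenerate, equation \eqref{eq:rad_kappa_0}. The naive analogues of the Haraux--Weissler and Bidaut-V\'{e}ron energies fail to be monotone here, which forces the auxiliary change of variables and a parallel treatment of the two formulations, with free parameters tuned so that at least one energy is monotone on each relevant range of $q$; making the thresholds $q_c(k)$ and $q^\ast(k)$ emerge cleanly from this bookkeeping, and handling the borderline cases $q=q_c(k)$ and $q=q^\ast(k)$, is where the delicate work lies.
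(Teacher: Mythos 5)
Your overall strategy---reduce to the profile equation \eqref{eq:rad_kappa_0}, convert the critical exponents into thresholds for $\kappa_0=2k/(q-k)$ (your computations $q=q_c(k)\Leftrightarrow\kappa_0=n$ and $q=q^\ast(k)\Leftrightarrow\kappa_0=(n-2k)/(k+1)$ are exactly the ones the paper uses), pass to an equivalent equation with a monotone energy, and shoot in $\gamma$---is the paper's strategy, and your $L^p$ and $\sup_{|x|\geq\epsilon}$ computations in $(c)$ are correct (the identity $|u(t,x)|=|x|^{-\kappa_0}\,r^{\kappa_0}|v(r)|$ is a clean way to get the last limit). But there is a concrete gap in part $(c)$: you never establish the ``other end'' of the shooting argument, namely that for $\gamma$ large the solution must change sign, and this is precisely where the hypothesis $q<q^\ast(k)$ does its work. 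Your substitute---``provided $q<q^\ast(k)$ so that the fast (stable) manifold is nontrivial''---is not an argument: the set $B$ of initial heights producing a zero could a priori be empty, and then there is nothing to shoot between. The paper's Proposition \ref{vzeros} fills this by a blow-up/rescaling argument: one rescales $\hat v_m(r)=\gamma_m^{-1}v(\gamma_m^{-1/\kappa_0}r)$, passes to the limit to obtain a nonnegative radial solution of the pure-power equation $c_{n,k}r^{1-n}(r^{n-k}(\hat v')^k)'+|\hat v|^{q-1}\hat v=0$ with $\hat v(0)=1$, and invokes the Liouville-type nonexistence theorem of Cl\'ement--Man\'asevich--Mitidieri, which holds exactly for $k<q<q^\ast(k)$. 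Without this (or an equivalent) ingredient, part $(c)$ does not close.

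A secondary weakness is part $(d)$: ``the sign of the energy forces every nontrivial profile to keep a constant sign'' is not something the basic monotone energy $\mathcal{E}$ gives you. The paper needs the Pohozaev-type functional $\mathcal{V}_{\lambda,\sigma,\mu}$ of \eqref{eq:VPoho} with the specific choice $\lambda=\delta+1$, $\sigma=(n-2k)/(2k)$, $\mu=b\sigma+\kappa-n$, for which all terms of $\mathcal{V}'$ are nonpositive precisely when $\kappa_0\leq n/2$ and $q\geq q^\ast(k)$; the supercriticality enters through the sign of the coefficient of $|\theta|^{q+1}$. Similarly, your heuristic for compact support when $k>1$ (``the zero function is an admissible continuation \ldots a fast profile reaches such a state at finite $r$'') assumes what must be proved; the paper's Proposition \ref{cs} instead derives a contradiction from the assumption of unbounded support via the Emden--Fowler variables. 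These are fixable by supplying the stated arguments, but as written the proposal is a plan rather than a proof at exactly the points where the critical exponents must be used.
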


 For the reader's convenience, we summarize the results of Theorem \ref{beha_type_I_sol}
in the following table:
\begin{table}[H]
\centering
\begin{tabularx}{\textwidth}{| X | X | X | X | X| X|} 
\hline
 \small $k<q\leq q_c(k)$  & \small $q_c(k)<q<q^*(k)$ & 
 \small $q\geq q^*(k)$\\
\hline
 \small All nontrivial solutions are crossing solutions. There exists an infinite number of fast decay solutions with compact support.
&There exists both slow and fast solutions. If $k>1$, the fast decay solutions have compact support.
 \small 
& All nontrivial solutions are either positive or negative and are slow decay solutions. 
\small  
\\ 
\hline
\end{tabularx}
\caption{Classification of the profile $v$ of the solution $u(t,x)$ of the form \eqref{eq:type_I} depending on the exponent $q$.}
\end{table}
 
Some remarks are in order.
\begin{remark}
The statement $(ii)$ for the semilinear equation \eqref{eq:Lapla} is in stark contrast with the case $k>1$ in statement $(c)$ of Theorem \ref{beha_type_I_sol}, where the profiles of the solutions are compactly supported in that range of exponents.
\end{remark}

\begin{remark}
Let $u$ be the self-similar solution of equation \eqref{eq:k-evol} of the form \eqref{eq:type_I}. So, in addition to the results obtained in part $(c)$ of Theorem \ref{beha_type_I_sol}, we can also obtain information about the pointwise behavior of $u(t,x)$ as $t\rightarrow 0$. In this respect, the fast decay solutions are of special interest. In fact, choose $\gamma>0$ such that $L=L(\gamma)=0$ and let
\[
u(t,x)=(c_{n,k}\beta_0^{-1}t)^{-\frac{1}{q-1}}v(\abs{\xi}),\, \xi=(c_{n,k}\beta_0^{-1}t)^{-\beta_0}x.
\]
If $x\neq 0$, it is easy to see that
\[
\lim_{t\rightarrow 0}u(t,x)=\abs{x}^{-\kappa_0}\lim_{r\rightarrow\infty}r^{\kappa_0}v(r)=\abs{x}^{-\kappa_0}L=0.
\]
Then $u(t,x)$ has initial value $u(0)=0$ pointwise a.e.
\end{remark}

\begin{remark}
Let $u$ be a crossing solution corresponding to part $(e)$ of Theorem \ref{beha_type_I_sol}. Let $p\geq 1$ and let $t>0$. If $v$ is a fast decay solution, then
\[
\norm{u(t)}_p=(c_{n,k}\beta_0^{-1}t)^{\frac{n\beta_0}{p}-\frac{1}{q-1}}\norm{v}_p,
\]
so that the norm of $u(t)$ in $L_p(\RR^n)$ is independent of $t$ if and only if $p=\frac{n(q-k)}{2k}$. In particular, $\norm{\cdot}_1$ is invariant by scaling if and only if $q=q_c(k)$.
\end{remark}

\begin{remark}
Let $u$ be a self-similar solution of \eqref{eq:k-evol} of type I whose existence is guaranteed by part $(e)$ of Theorem \ref{beha_type_I_sol}. Then for the range of exponents $q<q_c(k)$ the singularity of the solution $u$ at $(0,0)$ is stronger than the singularity of $U_C$ in \eqref{eq:k-Baren}. To see this, note for example that
\[
U_C(t,0)=C^{\frac{k}{k-1}}t^{-\alpha}
\]
while
\[
u(t,0)=(c_{n,k}\beta_0^{-1}t)^{-\frac{1}{q-1}}v(0)
\]	
and $\frac{1}{q-1}<\frac{n}{n(k-1)+2k}=\alpha$ because $q<q_c(k)$.	
\end{remark}
	
The paper is organized as follows. In Section 2 we define a change of variables which allow us to transform the radial fully nonlinear equation into an equivalent one but much easier to handle. With this new equation and the energy associated with it we show some basic properties of the solutions. A result about the possible existence of crossing solutions and positive solutions is also given in this section. In Section 3 we investigate the number of zeros of crossing solutions. Estimates on the solutions and its derivatives are given in Section 4. In Section 5 we present the main results concerning the precise asymptotic behavior near infinity of the solutions. First, for the solutions $\theta$ of the equivalent problem \eqref{eq:newgoveq}-\eqref{eq:newinicond} and then, using the transformation \eqref{eq:cv}, for the solutions $v$ of problem \eqref{eq:goveq}-\eqref{eq:inicond}. Section 6 is dedicated to the study of nonnegative solutions $\theta$ of equation \eqref{eq:newgoveq}. We prove the existence of slow decay solutions, fast decay solutions and solutions with compact support for different ranges of parameters $\gamma, \kappa, q$ and dimension $n$. Applying the results of the previous sections the proof of Theorem \ref{beha_type_I_sol} follows, by taking $\kappa=\kappa_0$. Finally, in Section 7, we give an example of the application of our general results.

\section{Change of variables and energy functions}\label{cvef}
In this section, we begin by rewriting our equation in a suitable equivalent form. Next, we introduce a change of variables that reduce the equation to an equivalent one. The new equation has associated a natural energy function. From this energy and a generalized version of this, many properties of the solutions follows easily.

To proceed with our study, in a manner similar to the approach of \cite{BV06}, we consider
a more general equation, which contain \eqref{eq:rad_kappa_0} as a particular case
\begin{equation}\label{eq:goveq} 
(r^{n-k}(v')^k)'+r^{n-1}(rv'+\kappa v+c_{n,k}^{-1}\abs{v}^{q-1}v)=0\;\; \mbox{in}\;\; (0,\infty).
\end{equation}
This equation only contains $\kappa$ as a parameter and is subject, for $\gamma\neq 0$, to the initial conditions
\begin{equation}
\label{eq:inicond}
\begin{cases}
v(0)=\gamma,\\
v'(0)=0.
\end{cases}
\end{equation}
We are interesting in global solutions $v\in C^2((0,\infty))\cap C^1([0,\infty))$ of problem \eqref{eq:goveq}-\eqref{eq:inicond}.
By symmetry, we restrict our study to the case when $\gamma>0$ since $k$ is odd. For a solution $v=v(\cdot,\gamma)$ of problem \eqref{eq:goveq}-\eqref{eq:inicond}, we define $L(\gamma)\equiv\lim_{r\rightarrow\infty}r^{\kappa}v(r)=L$. As we will see this limit always exists as a finite number.

Note that equation in \eqref{eq:goveq} can be written under the equivalent form 
\begin{equation}\label{eq:equiv}
(r^n(v+r^{-k}(v')^k))'+r^{n-1}((\kappa-n)v+c_{n,k}^{-1}\abs{v}^{q-1}v)=0\;\; \mbox{in}\;\; (0,\infty).
\end{equation}
Defining 
\[
J_n(r)=r^n(v+r^{-k}(v')^k),
\]
we see that \eqref{eq:equiv} is equivalent to
\begin{equation}\label{eq:Jotanprima}
J'_n(r)=r^{n-1}(n-\kappa-c_{n,k}^{-1}\abs{v}^{q-1})v.
\end{equation}

We also need the function
\begin{equation}\label{eq:Jotakappa}
J_{\kappa}(r)=r^\kappa(v+r^{-k}(v')^k)=r^{\kappa-n}J_n(r),
\end{equation}
which satisfies
\begin{equation}\label{eq:Jotakappaprima}
J_{\kappa}'(r)=r^{\kappa-1}((\kappa-n)r^{-k}(v')^k-c_{n,k}^{-1}\abs{v}^{q-1}v). 
\end{equation}
As will be seen the function $J_\kappa$ is used to prove that the limit $L(\gamma)$ defined above is a real number. 

As we say above it is more convenient to look at the problem \eqref{eq:goveq}-\eqref{eq:inicond} in a scaled version. For this, we introduce a change of variables that reduces the problem to an equivalent problem preserving the initial conditions, but in which the spatial dimension $n$ is change to a lower fractional dimension $\tilde{n}$. Due to the structure of the equation in the new variables, emerges naturally an energy function. The energy is decreasing along the solutions, this is a key point to give the first step in the analysis of the new equation. 

Let us proceed. Set
\begin{equation}\label{eq:cv}
v(r)=\theta (s),\; s=ar^b.
\end{equation}
A similar change of variables was introduced by Ph. Korman \cite{Korman20} in the context of radial $p$-Laplace equations.
Define the function $h(s,\theta,\theta')$ by $h(s,\theta,\theta'):=bs\theta'+\tilde{f}(\theta)$, where $\tilde{f}(\theta)=\kappa\theta+c_{n,k}^{-1}\abs{\theta}^{q-1}\theta$. The constants $a$ and $b$ in \eqref{eq:cv} will be chosen appropriately later. Clearly, $v'=abr^{b-1}\theta'$, and we have
 \[
 (s^{\frac{n-2k+kb}{b}}(\theta')^k)'+a^{-\frac{2k}{b}}b^{-(k+1)}s^{\frac{n-b}{b}}h(s,\theta,\theta')=0.
 \]
 We now choose $b$ to equalize the powers of $s$:
 \begin{equation}\label{eq:eqpo}
 \frac{n-2k+kb}{b}=\frac{n-b}{b}=:\delta\,,
 \end{equation}
 \begin{equation}\label{eq:b}
 b=\frac{2k}{k+1}.
 \end{equation}
 The common power $\delta$, defined in \eqref{eq:eqpo}, is then
 \begin{equation}\label{eq:delta}
 \delta=\frac{n(k+1)-2k}{2k}.
 \end{equation}
 Observe that $1\leq b<2$ and $\delta>0$.
 On the other hand, we choose $a=b^{-1}$ so that $a^{-\frac{2k}{b}}b^{-(k+1)}=1$. Hence, we obtain the equation
 \begin{equation}\label{eq:normaleq}
 (s^{\delta}(\theta')^k)'+s^{\delta}h(s,\theta,\theta')=0.
 \end{equation}
Note that the fractional dimension mentioned above is given by $\tilde{n}:=\delta+1=\frac{n(k+1)}{2k}$. 
  
Now we show that \eqref{eq:cv} preserves the initial conditions in \eqref{eq:inicond}. In fact, from \eqref{eq:goveq}, we have 
\[
(-v'(r))^k=\frac{1}{c_{n,k}r^{n-k}}\int_0^r \tau^{n-1}(\beta\tau v'(\tau)+f(v(\tau)))d\tau,
\]
since $k$ is an odd integer. 

Note that $r\rightarrow 0$ as $s\rightarrow 0$, since $b\geq 1$. Thus, using the L'Hospital rule, we have
\[
-\frac{d\theta}{ds}(0)=\lim_{r\downarrow 0}\frac{-v'(r)}{r^{b-1}}=\lim_{r\downarrow 0}\left[\frac{(-v'(r))^k}{(r^{b-1})^k}\right]^\frac{1}{k},
\]
and
\[
\lim_{r\downarrow 0}\frac{(-v'(r))^k}{(r^{b-1})^k}=\lim_{r\downarrow 0}\frac{\beta r^n v(r)+\int_0^r \tau^{n-1}((\alpha-n\beta)v(\tau)+\abs{v(\tau)}^{q-1}v(\tau))d\tau}{c_{n,k}r^{n-k+k(b-1)}}=0,
\]
since $2-b>0$ by \eqref{eq:b}.

Hence, taking into account \eqref{eq:normaleq}, we have proved that problem \eqref{eq:goveq}-\eqref{eq:inicond} is equivalent to 
\begin{equation}\label{eq:newgoveq}
(s^{\delta}(\theta')^k)'+s^{\delta}(bs\theta'+\kappa\theta+c_{n,k}^{-1}\abs{\theta}^{q-1}\theta)=0\;\; \mbox{in}\;\; (0,\infty),
\end{equation}
where a prime denotes differentiation with respect to $s>0$ and 
\begin{equation}
\label{eq:newinicond}
\begin{cases}
\theta(0)=\gamma,\\
\theta'(0)=0.
\end{cases}
\end{equation}

Until the end of this section we will dealt with equation \eqref{eq:newgoveq}. In the next sections, we will study in parallel both equations \eqref{eq:goveq} and \eqref{eq:newgoveq} and always we keeping in mind that through \eqref{eq:cv} these two equations are equivalent. Thus a property for $\theta$ implies a property for $v$ and vice versa, a property for $v$ implies a property for $\theta$. This may seem unorthodox, but it is effective. The local existence and uniqueness of a solution $\theta$ for problem \eqref{eq:newgoveq}-\eqref{eq:newinicond} for a fixed $\gamma$ is obtained by applying the contraction mapping principle and the Gronwall lemma, respectively. We omit the details.

Let $\theta$ be a solution of problem \eqref{eq:newgoveq}-\eqref{eq:newinicond}. We prove that $(\theta')^k$ is a $C^1$ function near $s=0$. Integrating the equation in \eqref{eq:newgoveq} from 0 to $s$, we obtain
\[
(\theta')^k(s)=-bs\theta(s)-s^{-\delta}\int_0^s \tau^{\delta}\theta(\tau)(\kappa-n+c_{n,k}^{-1}\abs{\theta(\tau)}^{q-1})d\tau
\]
 which yields 
\[
(\theta')^k(s)=-\frac{\gamma}{\tilde{n}}s(\kappa+c_{n,k}^{-1}\gamma^{q-1}+o(1))\;\; \mbox{as }\; s\downarrow 0.
\]
Using this and the fact that $\theta$ satisfies the equation in \eqref{eq:newgoveq}, we have
\begin{equation}\label{thetaprimezero}
\lim_{s\downarrow 0}((\theta')^k)'(s)=-\frac{\gamma}{\tilde{n}}(\kappa+c_{n,k}^{-1}\gamma^{q-1}).
\end{equation}
Hence $(\theta')^k\in C^1([0,r_\gamma))$, where $r_\gamma>0$ small enough is granted by the local existence and uniqueness near zero. Now using that $k$ is odd, we can write $\theta'=((\theta')^k)^\frac{1}{k}$ and then
\[
\theta''=\frac{1}{k}(\theta')^{1-k}((\theta')^k)'
\]
 at the points $s$ such that $\theta'(s)\neq 0$.
 
Note that equation in \eqref{eq:newgoveq} can also be written under the equivalent forms
\begin{equation}\label{eq:maineq}
((\theta')^k)'+\frac{\delta}{s}(\theta')^k+bs\theta'+\kappa\theta+c_{n,k}^{-1}\abs{\theta}^{q-1}\theta=0\;\; \mbox{in}\;\; (0,\infty).
\end{equation}
\begin{equation}\label{eq:deltaeq}
(s^{\delta+1}(b\theta+s^{-1}(\theta')^k))'+s^\delta((\kappa-n)\theta+c_{n,k}^{-1}\abs{\theta}^{q-1}\theta)=0\;\; \mbox{in}\;\; (0,\infty).
\end{equation}
Defining 
\begin{equation}\label{eq:jotadelta}
J_\delta(s)=s^{\delta+1}(b\theta+s^{-1}(\theta')^k),
\end{equation}
we see that \eqref{eq:deltaeq} is equivalent to
\begin{equation}\label{eq:jotadeltaprima}
J'_\delta(s)=s^{\delta}(n-\kappa-c_{n,k}^{-1}\abs{\theta}^{q-1})\theta.
\end{equation}

Associated to equation \eqref{eq:maineq} we have a natural energy function $\mathcal{E}$ given by
\[
\mathcal{E}(s)=\frac{k}{k+1}(\theta')^{k+1}+\frac{\kappa}{2}\,\theta^2+\frac{c_{n,k}^{-1}}{q+1}\abs{\theta}^{q+1},
\]
which is nonincreasing along the solutions, because 
\[
\mathcal{E}'(s)=-\delta s^{-1}(\theta')^{k+1}-bs(\theta')^2\leq 0.
\]
As in \cite{BV06}, we introduce a Pohozaev-type function with parameters $\lambda\geq 0$ and $\sigma, \mu\in\mathbb{R}$:
\begin{equation}\label{eq:VPoho}
\mathcal{V}_{\lambda,\sigma,\mu}(s)=s^\lambda\left(\frac{k}{k+1}(\theta')^{k+1}+\frac{c_{n,k}^{-1}}{q+1}\abs{\theta}^{q+1}+\mu\frac{\theta^2}{2}+\sigma s^{-1}(\theta')^k\theta\right).
\end{equation}
After computation we find
\begin{eqnarray*}
s^{1-\lambda}\mathcal{V}'_{\lambda,\sigma,\mu}(s)&=&-\left(\delta-\sigma-\frac{k\lambda}{k+1}\right)(\theta')^{k+1}-c_{n,k}^{-1}\left(\sigma-\frac{\lambda}{q+1}\right)
\abs{\theta}^{q+1}\\
&+&\sigma (\lambda-\delta-1)s^{-1}\theta(\theta')^k-b\left(s\theta'+\frac{b\sigma-\mu+\kappa}{2b}\theta\right)^2\\
&-&b^{-1}\left(b\kappa\sigma-\frac{b\lambda\mu}{2}
-\frac{(b\sigma-\mu+\kappa)^2}{4}\right)\theta^2.
\end{eqnarray*}
We note that, by construction, $\mathcal{E}=\mathcal{V}_{0,0,\kappa}$. So from \eqref{eq:VPoho} we recover the energy $\mathcal{E}$. 

For given $d\in\mathbb{R}$, consider the Emden-Fowler transformation
\begin{equation}\label{eq:cvlog}
\theta(s)=s^{-d}y_d(\tau),\;\; \tau=\ln s.
\end{equation}
Set $\eta=\frac{\delta}{k}-1$. The following equation is obtained for points $\tau$ such that $\theta'(s)\neq 0$.
\begin{equation}\label{eq:cveqn}
\begin{split}
&y_d''+(\eta-2d)y_d'-d(\eta-d)y_d\\
&+k^{-1}e^{((k-1)d+k+1)\tau}(y_d'-dy_d)^{1-k}(by_d'-(bd-\kappa)y_d+c_{n,k}^{-1}e^{-d(q-1)\tau}\abs{y_d}^{q-1}y_d)=0.
\end{split}
\end{equation}

Setting 
\begin{equation}\label{eq:cvsys}
Y_d(\tau)=-s^{(d+1)k}(\theta')^k,
\end{equation}
we can write \eqref{eq:cveqn} as a system
\begin{equation}\label{eq:system}
\begin{split}
&y_d'=dy_d-\abs{Y_d}^{\frac{1-k}{k}}Y_d,\\
&Y_d'=k(d-\eta)Y_d+e^{((k-1)d+k+1)\tau}(\kappa y_d+c_{n,k}^{-1}e^{-d(q-1)\tau}\abs{y_d}^{q-1}y_d-b\abs{Y_d}^{\frac{1-k}{k}}Y_d).
\end{split}
\end{equation}

The following result concern the boundedness of $\theta$ and $\theta'$ with explicit bounds.
 \begin{proposition}\label{tetateta'bound}
 For any $\gamma>0$, there exists a unique solution $\theta$ of problem \eqref{eq:newgoveq}-\eqref{eq:newinicond} in $[0,\infty)$, which satisfies
 \[
 \abs{\theta(s)}\leq\gamma,\;\; \forall s\geq 0
 \]
and
\[
 \abs{\theta'(s)}\leq\left(\frac{k+1}{k}\right)^\frac{1}{k+1}\left(\frac{\kappa}{2}\gamma^2+\frac{c_{n,k}^{-1}}{q+1}\gamma^{q+1}\right)^\frac{1}{k+1},\;\; \forall s\geq 0.
 \]
 \end{proposition}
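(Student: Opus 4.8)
The plan is to read off the two bounds directly from the energy $\mathcal{E}$ and then to promote the local solution to a global one by a continuation argument, the local existence and uniqueness near $s=0$ being already available. Fix $\gamma>0$ and let $[0,S_{\max})$, with $S_{\max}\in(0,\infty]$, be the maximal interval of existence of the solution $\theta$ of \eqref{eq:newgoveq}--\eqref{eq:newinicond}. Since $k$ is odd, $k+1$ is even, so $(\theta')^{k+1}=\abs{\theta'}^{k+1}\geq 0$; as $\kappa>0$, every term of $\mathcal{E}$ is then nonnegative. From $\theta'(0)=0$ and $\theta(0)=\gamma>0$ we get $\mathcal{E}(0)=\frac{\kappa}{2}\gamma^{2}+\frac{c_{n,k}^{-1}}{q+1}\gamma^{q+1}$, and since $\mathcal{E}'(s)=-\delta s^{-1}(\theta')^{k+1}-bs(\theta')^{2}\leq 0$ the energy is nonincreasing, so $\mathcal{E}(s)\leq\mathcal{E}(0)$ for all $s\in[0,S_{\max})$.

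I then split $\mathcal{E}$ into its kinetic part $\frac{k}{k+1}(\theta')^{k+1}$ and its potential part $g(\abs{\theta})$, where $g(m):=\frac{\kappa}{2}m^{2}+\frac{c_{n,k}^{-1}}{q+1}m^{q+1}$ is strictly increasing on $[0,\infty)$. Discarding the nonnegative kinetic term in $\mathcal{E}(s)\leq\mathcal{E}(0)$ leaves $g(\abs{\theta(s)})\leq g(\gamma)$, and the strict monotonicity of $g$ forces $\abs{\theta(s)}\leq\gamma$ on $[0,S_{\max})$; discarding instead the nonnegative potential term leaves $\frac{k}{k+1}\abs{\theta'(s)}^{k+1}\leq\mathcal{E}(0)$, which rearranges to $\abs{\theta'(s)}\leq \bigl(\frac{k+1}{k}\bigr)^{1/(k+1)}\bigl(\frac{\kappa}{2}\gamma^{2}+\frac{c_{n,k}^{-1}}{q+1}\gamma^{q+1}\bigr)^{1/(k+1)}$ on $[0,S_{\max})$, the second asserted bound.

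These bounds are uniform, which yields global existence in the usual way: writing \eqref{eq:newgoveq} as a first order system for $(\theta,(\theta')^{k})$, if $S_{\max}<\infty$ then on $[S_{\max}/2,S_{\max})$ both components and (via the equation, $s$ being bounded below) their $s$-derivatives are bounded, so $\theta$ and $(\theta')^{k}$ extend continuously to $s=S_{\max}$ and the solution restarts there, contradicting maximality; hence $S_{\max}=\infty$. Uniqueness on $[0,\infty)$ propagates from the local uniqueness near $s=0$ by the same reasoning; the only configuration in which the vector field of the system fails to be locally Lipschitz and branching could occur is a point $s_{*}>0$ with $\theta(s_{*})=\theta'(s_{*})=0$, but there $\mathcal{E}(s_{*})=0$, and since $\mathcal{E}\geq 0$ is nonincreasing it then vanishes identically on $[s_{*},\infty)$, forcing $\theta\equiv 0$ beyond $s_{*}$, so no branching occurs. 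The only step requiring a little care is the regularity of $\mathcal{E}$ at $s=0$ and at the zeros of $\theta'$: this is covered by facts already established in this section, since $(\theta')^{k}$ is $C^{1}$ near $0$ by \eqref{thetaprimezero} and $x\mapsto\abs{x}^{(k+1)/k}$ is $C^{1}$, so $(\theta')^{k+1}=\abs{(\theta')^{k}}^{(k+1)/k}$ is a $C^{1}$ function of $s$ and the above formula for $\mathcal{E}'$ holds on all of $(0,\infty)$, extending by continuity to $s=0$. Past that the argument is elementary: it is merely the monotonicity of $\mathcal{E}$ combined with the strict monotonicity of the potential $g$, which is where essentially all the content sits.
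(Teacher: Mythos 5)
Your argument is correct and is essentially the paper's own proof: both rest on the monotonicity of the energy $\mathcal{E}$, reading off the bound on $\abs{\theta}$ by discarding the kinetic term and the bound on $\abs{\theta'}$ by discarding the potential term, with global existence following from these a priori bounds and the degenerate case $\theta(s_1)=\theta'(s_1)=0$ handled by noting that $\mathcal{E}$ then vanishes identically thereafter. You merely spell out the continuation and regularity details that the paper leaves implicit.
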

 \begin{proof}
We have
\[
\mathcal{E}(s)=\frac{k}{k+1}(\theta')^{k+1}+\frac{\kappa}{2}\,\theta^2+\frac{c_{n,k}^{-1}}{q+1}\abs{\theta}^{q+1}\leq \mathcal{E}(0)=\frac{\kappa}{2}\,\gamma^2+\frac{c_{n,k}^{-1}}{q+1}\,\gamma^{q+1},
\]
hence $\theta$ and $\theta'$ stay bounded. In particular, $\theta$ exists for all $s\geq 0$. Note that if $\theta(s_1)=\theta'(s_1)=0$, then $\theta\equiv 0$ on $[s_1,\infty)$ because $\mathcal{E}$ is nonincreasing.
\end{proof}

We  have the following result on the behavior of the solutions at infinity.
\begin{proposition}\label{uu'vanish}
Let $\theta$ be any solution of equation in \eqref{eq:newgoveq}. Then  
\begin{equation}\label{tetateta'zero}
\lim_{s\rightarrow\infty}\theta(s)=0,\;\;\;\:\lim_{s\rightarrow\infty}\theta'(s)=0.
\end{equation}
If $\theta>0$ for large $s$, then $\theta'<0$ for large $s$.
\end{proposition}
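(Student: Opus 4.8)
The plan is to base the entire argument on the energy $\mathcal{E}$ and its dissipation. Since $\mathcal{E}$ is nonincreasing along solutions and $\theta$ remains bounded (Proposition~\ref{tetateta'bound}; this also follows directly from the monotonicity of $\mathcal{E}$), the limit $\mathcal{E}_\infty:=\lim_{s\to\infty}\mathcal{E}(s)$ exists and is finite, so integrating $\mathcal{E}'(s)=-\delta s^{-1}(\theta')^{k+1}-bs(\theta')^2$ over $[1,\infty)$ gives
\[
\int_1^\infty\Bigl(\frac{\delta}{s}(\theta'(s))^{k+1}+bs(\theta'(s))^2\Bigr)ds=\mathcal{E}(1)-\mathcal{E}_\infty<\infty,
\]
and in particular $\int_1^\infty s(\theta'(s))^2\,ds<\infty$. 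All three assertions will be read off from this bound.

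\textbf{Step 1: $\theta'(s)\to 0$.} Suppose not. Since $k$ is odd, $-\theta$ is again a solution of \eqref{eq:newgoveq}, so after possibly replacing $\theta$ by $-\theta$ we may fix $\ep_0>0$ and points $s_j\to\infty$ with $\theta'(s_j)\ge 2\ep_0$ and, passing to a subsequence, $s_{j+1}>2s_j$. On any interval on which $\ep_0\le\theta'\le M$ ($M$ being the bound from Proposition~\ref{tetateta'bound}), the relation $\theta''=\tfrac1k(\theta')^{1-k}((\theta')^k)'$ and equation \eqref{eq:maineq} give the crude estimate $\abs{\theta''(s)}\le C_{\ep_0}\,s$ for $s\ge 1$. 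Hence $\theta'$ remains $\ge\ep_0$ on an interval to the right of $s_j$ of length $\gtrsim\ep_0/(C_{\ep_0}s_j)$ (the case where $\theta'\ge\ep_0$ persists much longer only helps, and if it persists on all of $[s_j,\infty)$ then $\int_1^\infty s(\theta')^2=\infty$ at once), and on that interval $s(\theta'(s))^2\gtrsim\ep_0^2 s_j$. Thus it contributes at least a fixed positive amount to $\int_1^\infty s(\theta')^2$; since the intervals attached to different $s_j$ are pairwise disjoint, we get $\int_1^\infty s(\theta')^2=\infty$, a contradiction. Hence $\theta'(s)\to 0$.

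\textbf{Step 2: $\theta(s)\to 0$.} With $\theta'\to 0$ in hand, write $W(\theta)=\tfrac{\kappa}{2}\theta^2+\tfrac{c_{n,k}^{-1}}{q+1}\abs{\theta}^{q+1}$, so that $W(\theta(s))=\mathcal{E}(s)-\tfrac{k}{k+1}(\theta'(s))^{k+1}\to\mathcal{E}_\infty$. Since $W$ is continuous, even, and strictly increasing in $\abs{\theta}$ with $W(0)=0$, this forces $\abs{\theta(s)}\to m:=W^{-1}(\mathcal{E}_\infty)\ge 0$, and it only remains to exclude $m>0$. If $m>0$ then $\theta$ has a fixed sign for large $s$, say $\theta(s)\to m$; using $J_\delta$ from \eqref{eq:jotadelta} we have $J_\delta(s)/s^{\delta+1}=b\theta(s)+s^{-1}(\theta'(s))^k\to bm$ on one hand, while integrating \eqref{eq:jotadeltaprima} and applying the L'Hospital rule gives $J_\delta(s)/s^{\delta+1}\to m(n-\kappa-c_{n,k}^{-1}m^{q-1})/(\delta+1)$ on the other. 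Equating these and using the identity $b(\delta+1)=n$, immediate from \eqref{eq:b} and \eqref{eq:delta}, yields $\kappa+c_{n,k}^{-1}m^{q-1}=0$, which is impossible since the left-hand side is positive. Hence $m=0$, i.e.\ $\theta(s)\to 0$, and then $(\theta'(s))^{k+1}=\tfrac{k+1}{k}(\mathcal{E}(s)-W(\theta(s)))\to 0$ as well.

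\textbf{Step 3: the sign of $\theta'$, and the main obstacle.} Suppose $\theta>0$ on $[S,\infty)$. If $\theta'\ge 0$ on all of $[S,\infty)$ then $\theta$ is nondecreasing there, so $\theta(s)\ge\theta(S)>0$, contradicting $\theta(s)\to 0$; hence $\theta'(s_1)<0$ for some $s_1\ge S$. At any $s_0\ge S$ with $\theta'(s_0)=0$, equation \eqref{eq:maineq} gives $((\theta')^k)'(s_0)=-\kappa\theta(s_0)-c_{n,k}^{-1}\theta(s_0)^q<0$, so $(\theta')^k$---and hence $\theta'$, as $k$ is odd---is strictly decreasing through $s_0$; therefore $\theta'$ can never pass from negative to nonnegative values on $(s_1,\infty)$, and so $\theta'<0$ there. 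I expect Step~1 to be the main obstacle: the dissipation controls $\theta'$ only in an integral sense, and since the term $bs\theta'$ allows $\theta''$ to be as large as $O(s)$, the pointwise limit cannot be obtained from uniform continuity; the disjoint-interval device above, which relies precisely on the weight $s$ in the dissipation term $bs(\theta')^2$, is what converts integral smallness into the pointwise statement. Ruling out a nonzero limit $m$ in Step~2 is the second delicate point, and it is dispatched cleanly by the algebraic identity $b(\delta+1)=n$ built into the change of variables \eqref{eq:cv}.
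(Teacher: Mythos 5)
Your proof is correct, but it reaches \eqref{tetateta'zero} by a genuinely different route than the paper. The paper works with the Pohozaev-type function $\mathcal{V}_{0,\delta/2,\kappa+b\delta/2}$ from \eqref{eq:VPoho}: the identity $-s\mathcal{V}'(s)\geq\frac{\delta}{2}\mathcal{E}(s)+o(1)$ shows that if the energy limit $l$ were positive then $-\mathcal{V}'$ would fail to be integrable while $\mathcal{V}$ stays bounded; hence $l=0$, and since every term of $\mathcal{E}$ is nonnegative both limits in \eqref{tetateta'zero} drop out in one stroke. You instead use only the raw dissipation $\int_1^\infty s(\theta')^2\,ds<\infty$: the disjoint-interval device (with the pointwise bound $\abs{\theta''}\le C_{\ep_0}s$ valid while $\theta'\ge\ep_0$) converts this integral smallness into $\theta'\to 0$, and then the function $J_\delta$ together with the identity $b(\delta+1)=n$ and L'Hospital rules out a nonzero limit of $\abs{\theta}$. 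Both arguments are sound (and both implicitly use $\kappa>0$, as does the paper throughout, e.g.\ in asserting $\mathcal{E}\ge 0$ and in your claim that $W$ is strictly increasing in $\abs{\theta}$). What each buys: the paper's argument is shorter because the general formula for $\mathcal{V}'_{\lambda,\sigma,\mu}$ has already been computed in Section 2 and is reused elsewhere; yours is self-contained, needing only $\mathcal{E}$ and $J_\delta$, at the cost of splitting the limit statement into two separate steps. Your treatment of the sign of $\theta'$ coincides with the paper's (every critical point with $\theta>0$ is a strict local maximum by \eqref{eq:maineq}), and is in fact spelled out in more detail.
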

\begin{proof}
Let $\theta$ be any solution on $[s_0,\infty)$ with $s_0>0$. Since the function $\mathcal{E}$ is non-increasing, $\theta$ and $\theta'$ are bounded. Then $\mathcal{E}$ has a nonnegative finite limit, say $l$. Consider the function $\mathcal{V}=\mathcal{V}_{\lambda,\sigma,\mu}$ defined in \eqref{eq:VPoho} with $\lambda=0,\, \sigma=\frac{\delta}{2}$ and $\mu=\kappa+b\sigma$. It is bounded near infinity and satisfies
\begin{eqnarray*}
-s\mathcal{V}'(s)&=&\frac{\delta}{2}\left((\theta')^{k+1}+\kappa\,\theta^2+c_{n,k}^{-1}\abs{\theta}^{q+1}+(\delta+1)s^{-1}\theta(\theta')^k+\frac{2b}{\delta}s^2\theta'^2\right)\\
&\geq&\frac{\delta}{2}\,\mathcal{E}(s)+o(1)\geq \frac{\delta}{2}\,l+o(1)\;\; \mbox{as }\; s\rightarrow\infty.
\end{eqnarray*}
If $l>0$, then $-\mathcal{V}'$ is not integrable, which is impossible. Thus, $l=0$, and \eqref{tetateta'zero} holds. Moreover, at each critical point $s$ such that $\theta(s)>0$, from \eqref{eq:maineq}, we have
\[
((\theta')^k)'(s)=-(\kappa+c_{n,k}^{-1}\theta(s)^{q-1})\theta(s).
\]
So, $s$ is a maximum. If $\theta(s)>0$ for large $s$, then from \eqref{tetateta'zero} necessarily $\theta'<0$ for large $s$.
\end{proof}
The final result in this section refers to the possible existence of crossing solutions. In such a case, the zeros of these solutions can be isolated or not. The existence of positive solutions of problem \eqref{eq:newgoveq}-\eqref{eq:newinicond} for sufficiently small values $\gamma$ and under a suitable condition on the parameter $\kappa$ is also obtained.
\begin{proposition}\label{zeros}
\begin{itemize}
\item [$(i)$] Suppose $\kappa<n$. Let $\underline{\gamma}=(c_{n,k}(n-\kappa))^{1/(q-1)}$. Then for any $\gamma\in (0,\underline{\gamma}],\, \theta>0$ on $[0,\infty)$.
\item [$(ii)$] Suppose $n\leq\kappa$. Then for any $\gamma>0,\, \theta(s)$ has at least one isolated zero.
\item[$(iii)$] For any $m>0$, any solution $\theta$ of equation \eqref{eq:maineq} has a finite number of isolated zeros in $[m,\infty)$, or $\theta\equiv 0$ in $[m,\infty)$.
\end{itemize}
\end{proposition}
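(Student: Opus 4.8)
The plan is to read off parts $(i)$ and $(ii)$ from the sign of $J_\delta'$, where $J_\delta(s)=s^{\delta+1}\bigl(b\theta+s^{-1}(\theta')^k\bigr)=s^{\delta}\bigl(bs\theta+(\theta')^k\bigr)$ satisfies $J_\delta'(s)=s^{\delta}\bigl(n-\kappa-c_{n,k}^{-1}|\theta|^{q-1}\bigr)\theta$ (see \eqref{eq:jotadelta}--\eqref{eq:jotadeltaprima}), combined with $|\theta|\le\gamma$ (Proposition \ref{tetateta'bound}), $\theta(s),\theta'(s)\to0$ as $s\to\infty$ (Proposition \ref{uu'vanish}), and $J_\delta(0)=0$ (because $\delta+1=\tilde{n}>0$ and $(\theta')^k(s)=O(s)$ as $s\to0^+$). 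For $(i)$: if $\kappa<n$, $0<\gamma\le\underline{\gamma}$, and $\theta$ had a first zero $s_1>0$, then on $(0,s_1)$ we would have $0<\theta\le\underline{\gamma}$, hence $n-\kappa-c_{n,k}^{-1}\theta^{q-1}\ge0$ and $J_\delta'\ge0$, so $J_\delta(s_1)\ge J_\delta(0)=0$; but $\theta(s_1)=0$ and $\theta'(s_1)\le0$ give $J_\delta(s_1)=s_1^{\delta}(\theta'(s_1))^k\le0$ ($k$ odd). Hence $J_\delta\equiv0$ on $[0,s_1]$, forcing $J_\delta'\equiv0$, i.e. $\theta\equiv\underline{\gamma}$ on $(0,s_1)$ — contradicting $\theta(s_1)=0$.

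For $(ii)$: assume $n\le\kappa$. If $\theta>0$ on all of $[0,\infty)$, then $J_\delta'<0$ and $J_\delta(0)=0$ give $J_\delta<0$, hence $(\theta')^k<-bs\theta<0$, so $\theta'<0$ and $-\theta'>(bs\theta)^{1/k}$ for all $s>0$; with $\theta\to0$ this is contradictory. For $k>1$ it already is, since $-\theta'\theta^{-1/k}=-\tfrac{k}{k-1}\tfrac{d}{ds}\theta^{(k-1)/k}>(bs)^{1/k}$ and integration over $(1,R)$ keeps $\int_1^R(b\tau)^{1/k}\,d\tau$ bounded as $R\to\infty$. In general (in particular for $k=1$, where the previous step is not enough) one bootstraps: $J_\delta(s)\le J_\delta(1)=:-c_0<0$ yields $(-\theta')^k\ge c_0 s^{-\delta}$, and integrating $-\theta'\ge c_0^{1/k}s^{-\delta/k}$ on $(s,\infty)$ gives a bound $\theta(s)\ge C s^{-\beta}$ for some explicit exponent; repeatedly inserting $\theta\ge C s^{-\beta}$ into $-\theta'>(bs\theta)^{1/k}$ lowers the exponent, and after finitely many steps $\int_s^\infty(-\theta')\,d\tau$ diverges, forcing $\theta(s)=+\infty$. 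So $\theta$ has a first zero $s_1>0$; there $\theta>0$ on $(0,s_1)$ makes $J_\delta'<0$ strictly, so $J_\delta(s_1)<0$, whence $(\theta'(s_1))^k<0$, i.e. $\theta'(s_1)<0$: $s_1$ is isolated.

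For $(iii)$: a zero $s_0\ge m$ with $\theta'(s_0)=0$ forces $\theta\equiv0$ on $[s_0,\infty)$ (Proposition \ref{tetateta'bound}); so we may assume every zero $s_0\ge m$ has $\theta'(s_0)\ne0$, hence is isolated, and must rule out infinitely many such zeros $s_1<s_2<\cdots\to\infty$. As $-\theta$ also solves \eqref{eq:maineq}, Proposition \ref{uu'vanish} shows $\theta$ is not eventually of one sign, so for arbitrarily large $j$ one has $\theta>0$ on $(s_j,s_{j+1})$, $\theta'(s_j)>0>\theta'(s_{j+1})$; put $M_j=\max_{[s_j,s_{j+1}]}\theta\to0$. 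Integrating $J_\delta'$ over $(s_j,s_{j+1})$ and using $\theta^q\le M_j^{q-1}\theta$,
\[
\bigl(n-\kappa-c_{n,k}^{-1}M_j^{q-1}\bigr)\int_{s_j}^{s_{j+1}}s^{\delta}\theta\,ds\ \le\ J_\delta(s_{j+1})-J_\delta(s_j)=s_{j+1}^{\delta}(\theta'(s_{j+1}))^k-s_j^{\delta}(\theta'(s_j))^k<0,
\]
so $n-\kappa-c_{n,k}^{-1}M_j^{q-1}<0$, impossible for $\kappa<n$ once $j$ is large. Thus $(iii)$ holds when $\kappa<n$; it also holds when $\kappa<0$ (at an interior maximum $\zeta_j$ of a positive half-oscillation, $((\theta')^k)'(\zeta_j)=-(\kappa+c_{n,k}^{-1}M_j^{q-1})M_j\le0$ forces $M_j\ge(-\kappa\,c_{n,k})^{1/(q-1)}$, against $M_j\to0$). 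The remaining range $\kappa\ge n$ — precisely the crossing regime $q\le q_c(k)$ for the profiles of Theorem \ref{beha_type_I_sol} — is the main obstacle: there $J_\delta$ is only monotone on each half-oscillation and does not detect them. The point is that near $\theta=0$ equation \eqref{eq:maineq} is overdamped, the coefficient $bs$ of $\theta'$ blowing up while $\theta,\theta'\to0$ make the rest lower order. For $k=1$ one makes this precise with a Pr\"{u}fer transformation $\theta=\rho\sin\psi$, $\mu\theta'=\rho\cos\psi$, $\mu=\exp\!\big(\int(\delta s^{-1}+bs)\,ds\big)=s^{\delta}e^{bs^2/2}$: the associated Riccati equation for the angle, together with $(\delta s^{-1}+bs)^2>4\bigl(\kappa+c_{n,k}^{-1}|\theta|^{q-1}\bigr)$ for large $s$, keeps $\psi$ bounded away from the next multiple of $\pi$, so $\theta$ has finitely many zeros. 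For $k>1$ one argues with the half-linear form $(\theta')^k=|\theta'|^{k-1}\theta'$ and the analogous comparison, or, as for the $p$-Laplacian in \cite{BV06}, estimates on each large-index half-oscillation the ``time'' from a zero to the next extremum against the vanishing energy $\mathcal{E}$, which cannot recur infinitely often.
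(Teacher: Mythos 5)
Parts $(i)$ and $(ii)$ follow essentially the paper's own route: the sign of $J_\delta'$ from \eqref{eq:jotadeltaprima}, together with $J_\delta(0)=0$, gives both the positivity in $(i)$ and the existence of a first, isolated zero in $(ii)$. Your bootstrap for $k=1$ in $(ii)$ is a sensible addition; the paper's monotone quantity $s^{(k+1)/k}+\tfrac{k+1}{(k-1)b^{1/k}}\theta^{(k-1)/k}$ indeed degenerates at $k=1$, so your repair is welcome there.

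The genuine gap is in $(iii)$. Your half-oscillation estimate, obtained by integrating $J_\delta'$ between consecutive zeros, yields $M_j^{q-1}>c_{n,k}(n-\kappa)$ and hence contradicts $M_j\to 0$ only when $\kappa<n$ (or, via the extremum argument, when $\kappa<0$). You acknowledge this and leave the range $\kappa\ge n$ --- exactly the crossing regime of part $(ii)$ and of part $(e)$ of Theorem \ref{beha_type_I_sol}, where $(iii)$ is actually needed (it is invoked in Proposition \ref{pbehainf1}) --- to a sketched Pr\"ufer/half-linear comparison that is not carried out. The paper closes all cases uniformly in $\kappa$ with the Emden--Fowler substitution \eqref{eq:cvlog}, $\theta(s)=s^{-d}y_d(\ln s)$: choosing $d>b^{-1}\kappa$ makes the coefficient $bd-\kappa$ positive \emph{regardless of the sign of $n-\kappa$}, and evaluating the equation at an interior extremum $\nu$ of $|y_d|$ between consecutive zeros gives \eqref{eq:bound},
\[
\nu^{k+1}\abs{\theta(\nu)}^{1-k}\bigl(bd-\kappa-c_{n,k}^{-1}\abs{\theta(\nu)}^{q-1}\bigr)\le kd^k(d-\eta),
\]
whose left-hand side tends to $\infty$ with $\nu$ because $\theta(\nu)\to0$; this bounds the set of zeros. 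The same inequality, applied to a sequence of extrema converging to a finite accumulation point $\bar s$ with $\theta(\bar s)=\theta'(\bar s)=0$, also excludes infinitely many isolated zeros clustering at a finite $\bar s$. That second scenario is a further loose end in your write-up: you reduce to zeros with $\theta'\neq0$ and then assume they march to $\infty$, but infinitely many isolated zeros could a priori accumulate at a finite $\bar s$ (with $\theta\equiv0$ beyond $\bar s$ but not on all of $[m,\infty)$), which would still violate the statement and must be excluded.
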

\begin{proof}
\begin{itemize}
\item [$(i)$] Let $\gamma\in (0,\underline{\gamma}]$. Suppose that there exists a first $s_1>0$ such that $\theta(s_1)=0$, then $\theta'(s_1)\leq 0$. We now consider $J_\delta$ defined by \eqref{eq:jotadelta}. Then from \eqref{eq:jotadeltaprima}, we have that $J'_\delta(s)\geq 0$ on $[0,s_1)$, since $0<\theta(s)\leq\gamma$. Moreover, $J_\delta(0)=0$ and $J_\delta(s_1)=s_1^{\delta}(\theta')^k(s_1)\leq 0$. Hence $J'_\delta\equiv 0$ on $[0,s_1]$, and $\theta\equiv\underline{\gamma}$, a contradiction with \eqref{eq:maineq}.

\item [$(ii)$] Suppose on the contrary that for some $\gamma>0,\,\theta(s)>0$ on $[0,\infty)$. Note that $J'_\delta(s)<0$ on $(0,\infty)$, since $n\leq\kappa$. As $J_\delta(0)=0$, necessarily $J_\delta(s)\leq 0$. Then integrating the inequality $b\theta+s^{-1}(\theta')^k\leq 0$ on $(0,\infty)$ we deduce that $s\longmapsto s^\frac{k+1}{k}+\frac{k+1}{(k-1)b^{\frac{1}{k}}}\theta^\frac{k-1}{k}$ is nonincreasing, which is impossible. Thus $\theta$ has a first zero $s_1$, which satisfies $J'_\delta(s_1)<0$ on $(0,s_1)$ and $J_\delta(s_1)<0$. We conclude that $\theta'(s_1)<0$ and finally that $s_1$ is isolated.

\item[$(iii)$] Suppose that $\theta\not\equiv 0$ in $[m,\infty)$. Consider the set $\mathcal{Z}$ of all isolated zeros of $\theta$ in $[m,\infty)$. Since $(\theta(m),\theta'(m))\neq (0,0),\, m$ is not an accumulation point of $\mathcal{Z}$. Let $\nu_1<\nu_2$ be two consecutive zeros, such that $\nu_1$ is isolated and $\abs{\theta}>0$ on $(\nu_1,\nu_2)$. We perform the change of variables \eqref{eq:cvlog}, where $d>0$ will be chosen later. At each point $\tau$ such that $y_d'(\tau)=0$ and $y_d(\tau)\neq 0$, we infer
\[
ky_d''=y_d(kd(\eta-d)+e^{((k-1)d+k+1)\tau}(-dy_d)^{1-k}(bd-\kappa-c_{n,k}^{-1}e^{-d(q-1)\tau}\abs{y_d}^{q-1})).
\]
If $\tau\in (e^{\nu_1},e^{\nu_2})$ is a maximum point of $\abs{y_d}$, we deduce that
\[
e^{((k-1)d+k+1)\tau}(-dy_d(\tau))^{1-k}(bd-\kappa-c_{n,k}^{-1}e^{-d(q-1)\tau}\abs{y_d(\tau)}^{q-1})\leq kd(d-\eta).
\]
Taking $\nu=e^\tau\in (\nu_1,\nu_2)$, it read as 
\begin{equation}\label{eq:bound}
\nu^{k+1}\abs{\theta(\nu)}^{1-k}(bd-\kappa-c_{n,k}^{-1}\abs{\theta}^{q-1}(\nu))\leq kd^k(d-\eta).
\end{equation}
Now fix $d$ such that $d>b^{-1}\kappa$. Since $\lim_{s\rightarrow\infty}\theta(s)=0$, the coefficient of $\nu^{k+1}$ in the left-hand side of \eqref{eq:bound} tends to $\infty$ as $\nu\rightarrow\infty$. It follows that $\nu$ is bounded and also $\nu_1$. Hence $\mathcal{Z}$ is bounded. Suppose that $\mathcal{Z}$ is infinite, then there exists a sequence of zeros $(s_n)$, converging to some $\bar{s}\in (m,\infty)$ such that $\theta(\bar{s})=\theta'(\bar{s})=0$. It follows that there exists a sequence $(\tau_n)$ of maximum points of $\abs{y_d}$ converging to $\bar{\tau}=\ln\bar{s}$. Finally, taking $\nu=\nu_n=e^{\tau_n}$ in \eqref{eq:bound} we reach at the same contradiction as before.
\end{itemize}
\end{proof}

\section{The number of zeros of crossing solutions}
To prove part $(c)$ of Theorem \ref{beha_type_I_sol}, we need to examine the limiting behavior of $v(r,\gamma)$, suitably rescaled, as $\gamma\rightarrow\infty$. As a consequence we obtain a result concerning the number of zeros of the crossing solutions. These solutions exists in the subcritical case for large enough $\gamma$.
\begin{proposition}\label{vzeros}
Suppose $n>2k$ and $k<q<q^*(k)$. Then for every $N\in\mathbb{N}$, there exists $\overline{\gamma_N}$ such that for any $\gamma>\overline{\gamma_N},\; v(\cdot,\gamma)$ has more than $N$ isolated zeros. And for fixed $N$, the $N^\text{th}$ zero of $v(\cdot,\gamma)$ tends to 0 as $\gamma$ tends to $\infty$.
\end{proposition}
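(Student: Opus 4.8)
The plan is to exploit the scaling structure of equation \eqref{eq:goveq} to reduce the problem to a fixed limiting equation as $\gamma\to\infty$, and then count zeros of the limit. First I would introduce the rescaling suggested by the scaling group of \eqref{eq:k-evol}: set $w(t)=\gamma^{-1}v(\gamma^{-(q-k)/2k}t,\gamma)$ (equivalently $t=\gamma^{(q-k)/2k}r$), so that $w$ solves a problem of the same type as \eqref{eq:goveq} but with the coefficients of the linear term $rv'$ and the term $\kappa v$ carrying an explicit negative power of $\gamma$; more precisely these two terms pick up a factor $\gamma^{-(q-k)/k}$ (up to the bookkeeping one gets from \eqref{eq:eqpo}--\eqref{eq:delta}), while the nonlinear term $\abs{w}^{q-1}w$ and the principal part $(t^{n-k}(w')^k)'$ survive with coefficient of order one, and the initial data become $w(0)=1,\ w'(0)=0$. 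Hence, as $\gamma\to\infty$, $w$ converges (locally uniformly in $C^1$ on compact subsets of $[0,\infty)$, by the local existence theory and continuous dependence, together with the uniform bounds of Proposition \ref{tetateta'bound} transported through \eqref{eq:cv}) to the solution $w_\infty$ of the limiting autonomous-in-coefficients equation
\[
(t^{n-k}(w_\infty')^k)'+c_{n,k}^{-1}t^{n-1}\abs{w_\infty}^{q-1}w_\infty=0,\qquad w_\infty(0)=1,\ w_\infty'(0)=0,
\]
which, since $n>2k$ and $k<q<q^*(k)=\frac{(n+2)k}{n-2k}$, is exactly a subcritical Lane--Emden type $k$-Hessian equation whose radial solution is known to oscillate: it has infinitely many isolated zeros accumulating at $+\infty$, and in particular at least $N+1$ of them. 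This last fact is the classical Lane--Emden/Hénon oscillation statement for the $k$-Hessian in the subcritical range and I would cite it (e.g. via a Pohozaev/energy argument analogous to \eqref{eq:VPoho}, or from the literature on $S_k$ Lane--Emden equations); it is the genuine input, but for $\gamma=\infty$ it is a standard sub-critical phenomenon.

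Next I would run a continuity/transversality argument to pass zeros from $w_\infty$ back to $w$ for large finite $\gamma$. Fix $N$. Let $0<t_1<\cdots<t_{N+1}$ be the first $N+1$ zeros of $w_\infty$; at each $t_i$ one has $w_\infty'(t_i)\neq 0$ (the zeros are isolated and nondegenerate, because $w_\infty(t_i)=w_\infty'(t_i)=0$ would force $w_\infty\equiv0$ by the energy monotonicity, cf. Proposition \ref{tetateta'bound}). By the $C^1$ convergence $w(\cdot,\gamma)\to w_\infty$ on $[0,t_{N+1}+1]$, for $\gamma$ large each $t_i$ has a neighbourhood on which $w(\cdot,\gamma)$ changes sign, so $w(\cdot,\gamma)$ has at least $N+1>N$ zeros in $(0,t_{N+1}+1)$; by Proposition \ref{zeros}(iii) (applied after the change of variables \eqref{eq:cv}, or its direct analogue for \eqref{eq:goveq}) these zeros are isolated. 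Undoing the rescaling, the $j$-th zero $r_j(\gamma)$ of $v(\cdot,\gamma)$ equals $\gamma^{-(q-k)/2k}$ times the $j$-th zero of $w(\cdot,\gamma)$, and the latter stays bounded (it converges to $t_j$); since $q>k$ the prefactor $\gamma^{-(q-k)/2k}\to0$, so $r_j(\gamma)\to0$ as $\gamma\to\infty$, which is the second assertion. Setting $\overline{\gamma_N}$ to be any threshold beyond which the above approximation is valid completes the proof.

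The main obstacle is the convergence step: one must show $w(\cdot,\gamma)\to w_\infty$ in $C^1_{\mathrm{loc}}$ with enough uniformity to transfer nondegenerate sign changes, despite the fully nonlinear (and, where $\theta'$ vanishes, only Hölder-regular) principal part. The way I would handle this is to work with the integrated form of the equation, exactly as in \eqref{thetaprimezero} and the displays preceding it — writing $(t^{n-k}(w')^k)(t)$ as an explicit integral of lower-order terms — so that $(w')^k$, not $w'$, is the natural $C^1$ object; uniform equicontinuity of $(w')^k$ and of $w$ on compact sets follows from the $\gamma$-uniform a priori bounds, Arzelà--Ascoli gives a convergent subsequence, the limit is identified as $w_\infty$ by passing to the limit in the integral equation, and uniqueness of $w_\infty$ upgrades subsequential to full convergence. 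Near a zero $t_i$ of $w_\infty$ one has $w_\infty'(t_i)\neq0$, hence $(w')^k$ is comparable to a nonzero constant there and taking $k$-th roots (using $k$ odd) is harmless, so the sign change is stable under the $C^0$ convergence of $(w')^k$ — this is precisely where nondegeneracy of the limiting zeros is used.
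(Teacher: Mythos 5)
Your strategy is the one the paper itself uses: the rescaling $w(t)=\gamma^{-1}v(\gamma^{-(q-k)/2k}t,\gamma)$ is exactly the paper's $\hat v_m(r)=\gamma_m^{-1}v_m(\gamma_m^{-1/\kappa_0}r)$ (since $\kappa_0=2k/(q-k)$), the $\gamma$-uniform bounds come from Proposition \ref{tetateta'bound} transported through \eqref{eq:cv}, and the $C^1_{loc}$ compactness is run on the integrated form of the equation so that $(w')^k$ is the object that converges, identifying the limit as the radial Lane--Emden $k$-Hessian profile with $\hat v(0)=1$, $\hat v'(0)=0$. Up to that point your proposal and the paper's proof coincide essentially line by line.

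The divergence, and the one genuine gap, is in what you assume about the limit. You invoke ``the classical Lane--Emden oscillation statement for the $k$-Hessian in the subcritical range'' --- that the limiting profile has infinitely many zeros --- and then transfer $N+1$ nondegenerate sign changes back to $w(\cdot,\gamma)$. For $k=1$ (and for the $p$-Laplacian) that oscillation result is indeed classical, but for the $k$-Hessian with $k>1$ odd it is not an off-the-shelf fact, and the reference the paper actually leans on, \cite[Theorem 3.2]{ClMM98}, provides only the \emph{nonexistence of positive entire solutions}, i.e.\ the existence of \emph{one} zero of the limit. Accordingly, the paper uses the compactness argument only to rule out positivity of $v(\cdot,\gamma)$ for large $\gamma$, and obtains the remaining zeros by the iteration of \cite[Proposition 2.7(ii)]{BV06} (bootstrap past the first zero, using oddness of $k$, and repeat), not by oscillation of the limit. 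To close your argument you must either prove, or find a citable reference for, the infinite-oscillation property of the subcritical radial $k$-Hessian Lane--Emden equation, or else replace that step by the successive-zeros bootstrap. The remainder of your write-up --- nondegeneracy of the limiting zeros via energy monotonicity, stability of sign changes under $C^0$ convergence of $(w')^k$ with $k$ odd, and the conclusion that the $N$-th zero of $v(\cdot,\gamma)$ is $O(\gamma^{-(q-k)/2k})$ and hence tends to $0$ --- is sound and matches the paper's reasoning.
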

\begin{proof}
First we show that there exists $\gamma_\ast>0$ such that for any $\gamma>\gamma_\ast,\; v(\cdot,\gamma)$ cannot stay positive on $[0,\infty)$. Suppose on the contrary that there exists a sequence $(\gamma_m)$ tending to $\infty$ such that $v_m(r)=v(r,\gamma_m)\geq 0$ on $[0,\infty)$. Define
\begin{equation}\label{eq:scal}
\hat{v}_m(r)=\gamma_m^{-1}v_m(\gamma_m^{-\frac{1}{\kappa_0}}r).
\end{equation}
Then $\hat{v}_m(0)=1$ and $\hat{v}'_m(0)=0$. Further, $\hat{v}_m$ satisfies the equation
\begin{equation}\label{eq:scaleq}
(r^n(\gamma_m^{1-q}\hat{v}_m+r^{-k}(\hat{v}'_m)^k))'+r^{n-1}((\kappa-n)\gamma_m^{1-q}\hat{v}_m+c_{n,k}^{-1}\abs{\hat{v}_m}^{q-1}\hat{v}_m)=0.
\end{equation}
From \eqref{eq:cv} and Proposition \ref{tetateta'bound} applied to $v_m$ we have
\[
\hat{v}_m(r)\leq 1,\;\;\; \abs{\hat{v}'_m(r)}^{k+1}\leq\frac{k+1}{k}\left(\frac{\kappa}{2}\gamma_m^{1-q}+\frac{c_{n,k}^{-1}}{q+1}\right)\;\; \mbox{in }\;\; [0,\infty).
\]
Thus $\hat{v}_m$ and $\hat{v}'_m$ are uniformly bounded in $[0,\infty)$. From \eqref{eq:scaleq}, the derivatives of $r^n(\gamma_m^{1-q}\hat{v}_m+r^{-k}(\hat{v}'_m)^k)$ are uniformly bounded on any compact $\mathcal{K}$ of $(0,\infty)$. Moreover $\gamma_m^{1-q}\hat{v}_m$ converges uniformly to 0 in $[0,\infty)$, and by a diagonal sequence argument, $(\hat{v}'_m)^k$ converges uniformly on any $\mathcal{K}$, hence also $\hat{v}'_m$. Thus $\hat{v}_m$ converges uniformly in $C^1_{loc}(0,\infty)$ to a nonnegative function $\hat{v}\in C^1(0,\infty)$. Integrating \eqref{eq:scaleq} from 0 to $r$ yields 
\[
(\hat{v}'_m)^k(r)=-\gamma_m^{1-q}r^k\hat{v}_m+r^{k-n}\int_0^r\tau^{n-1}((n-\kappa)\gamma_m^{1-q}\hat{v}_m
-c_{n,k}^{-1}\abs{\hat{v}_m}^{q-1}\hat{v}_m)d\tau.
\]
Consequently, passing to the limit as $m\rightarrow\infty$, we obtain
\[
(\hat{v}')^k(r)=-c_{n,k}^{-1}r^{k-n}\int_0^r\tau^{n-1}\abs{\hat{v}}^{q-1}\hat{v}d\tau\;\; \mbox{in }\; (0,\infty).
\]
In particular, $\hat{v}'(r)\rightarrow 0$ as $r\rightarrow 0$, and therefore $\hat{v}$ can be extended to a function in $C^1([0,\infty))$ such that $\hat{v}(0)=1$ and $\hat{v}'(r)<0$ (recall that $k$ is odd). On the other hand, using the equivalent form
\[
((\hat{v}'_m)^k)'+\frac{n-k}{r}(\hat{v}'_m)^k+r^{k-1}(\gamma_m^{1-q}r\hat{v}'_m+\kappa\gamma_m^{1-q}\hat{v}_m+c_{n,k}^{-1}\abs{\hat{v}_m}^{q-1}\hat{v}_m=0
\]
for the equation in $\hat{v}_m$, we see that $\hat{v}''_m$ converges uniformly on any $\mathcal{K}$, whence $\hat{v}$ belong to $C^2((0,\infty))\cap C^1([0,\infty))$ and is a solution of the equation
\[
c_{n,k}r^{1-n}(r^{n-k}(\hat{v}')^k)'+\abs{\hat{v}}^{q-1}\hat{v}=0
\]
such that $\hat{v}(0)=1$ and $\hat{v}'(0)=0$. By \cite[Theorem 3.2]{ClMM98} (see Remark 3.1 on page 21 of that paper), this problem has no positive solutions in $C^2((0,\infty))\cap C^1([0,\infty))$ and hence any nontrivial solution has at least one zero, a contradiction, since then $\hat{v}$ must change sign.

The second part of the proof is the same as in \cite[Proposition 2.7, part (ii)]{BV06}. Finally, from \eqref{eq:scal} the $N^\text{th}$ zero of $v(\cdot,\gamma)$ tends to 0 as $\gamma$ tends to $\infty$.
\end{proof}

\section{Estimates of the solutions from above}
In this section we give estimates for the solutions of problems \eqref{eq:newgoveq}-\eqref{eq:newinicond} and \eqref{eq:goveq}-\eqref{eq:inicond}, respectively. Some of these estimates present the property of continuous dependence. The behavior at infinity of any solution and its derivative is also given.
These results are used in the next section to prove that for solutions $v$ of problem \eqref{eq:goveq}-\eqref{eq:inicond}, the limit $\lim\limits_{r\rightarrow\infty}r^{\kappa}v$ exists and is finite.

\begin{proposition}\label{pbehainf1}
Let $d\geq 0$.
\begin{itemize}
\item[$(i)$] Suppose that the solution $\theta$ of problem \eqref{eq:newgoveq}-\eqref{eq:newinicond} satisfies
\begin{equation}\label{eq:thetabound}
\abs{\theta(s)}\leq C_d (1+s)^{-d},
\end{equation}
on $[0,\infty)$, for some $C_d>0$. Then there exists another $D_d>0$, depending continuously on $C_d$ such that
\begin{equation}\label{eq:thetaprimabound}
\abs{\theta'(s)}\leq D_d(1+s)^{-d-1}.
\end{equation}

\item[$(ii)$] For any solution $\theta$ of \eqref{eq:newgoveq} such that $\theta(s)=O(s^{-d})$ near $\infty$, we have $\theta'(s)=O(s^{-d-1})$ near $\infty$.
\end{itemize}
\end{proposition}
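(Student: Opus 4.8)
The plan is to reduce both statements to a scalar differential inequality for the nonnegative, locally Lipschitz function $P(s):=\abs{\theta'(s)}^k=\abs{(\theta')^k(s)}$, and then to trap $P$ between $0$ and a barrier of the exact order $(1+s)^{-(d+1)k}$; since $\abs{\theta'(s)}=P(s)^{1/k}$, this gives $\abs{\theta'(s)}\le D_d(1+s)^{-(d+1)}$.

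First I would use the form \eqref{eq:maineq}, $((\theta')^k)'+\frac{\delta}{s}(\theta')^k+bs\theta'+F(\theta)=0$ with $F(\theta)=\kappa\theta+c_{n,k}^{-1}\abs{\theta}^{q-1}\theta$. Because $k$ is odd, $\operatorname{sign}((\theta')^k)=\operatorname{sign}(\theta')$ and $\abs{\theta'}=P^{1/k}$, so differentiating $P$ at a point where $\theta'\neq0$ and inserting the equation yields
\[
P'(s)=-\tfrac{\delta}{s}P(s)-bs\,P(s)^{1/k}-\operatorname{sign}(\theta'(s))\,F(\theta(s))\ \le\ B_d(1+s)^{-d}-bs\,P(s)^{1/k},
\]
where $B_d:=(\abs{\kappa}+c_{n,k}^{-1}C_d^{q-1})C_d$; here I used the hypothesis $\abs{\theta(s)}\le C_d(1+s)^{-d}$ and discarded the nonpositive term $-\frac{\delta}{s}P$. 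By Proposition \ref{tetateta'bound} the function $\theta'$ is bounded and of class $C^1$, so $P$ is locally Lipschitz and this inequality holds a.e.\ on $(0,\infty)$.

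Next I would compare $P$ with the barrier $\bar P(s):=A(1+s)^{-(d+1)k}$. A short computation shows $\bar P$ is a supersolution, i.e.\ $\bar P'+\frac{\delta}{s}\bar P+bs\bar P^{1/k}\ge B_d(1+s)^{-d}$, on a half-line $[s_*,\infty)$ as soon as $A^{1/k}\ge 2B_d/b$ and $s_*$ is large: the damping term $bs\bar P^{1/k}\sim bA^{1/k}(1+s)^{-d}$ dominates $B_d(1+s)^{-d}$ and swamps the remaining contribution $\bar P'+\frac{\delta}{s}\bar P$, which is of the lower order $(1+s)^{-(d+1)k-1}$ because $(d+1)k+1>d$. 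Since $y\mapsto B_d(1+s)^{-d}-\frac{\delta}{s}y-bs\,y^{1/k}$ is decreasing in $y\ge0$, the elementary scalar comparison principle gives $P\le\bar P$ on $[s_*,\infty)$ once $P(s_*)\le\bar P(s_*)$; and in fact $P\le\bar P$ on all of $[0,s_*]$ provided $A\ge(1+s_*)^{(d+1)k}\sup_{s\ge0}\abs{\theta'(s)}^k$, the last supremum being controlled by $C_d$ through Proposition \ref{tetateta'bound} (recall $\theta(0)=\gamma\le C_d$). Hence $\abs{\theta'(s)}\le A^{1/k}(1+s)^{-(d+1)}$ on $[0,\infty)$, so $D_d:=A^{1/k}$, and tracking the constants shows $D_d$ depends continuously on $C_d$. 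Part $(ii)$ is the same argument carried out on a half-line $[s_1,\infty)$ on which $\abs{\theta(s)}\le Cs^{-d}$, with $\bar P$ taken large enough at $s_1$.

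The delicate point is the joint choice of $A$ and $s_*$. If $(d+1)k<\delta$, the combination $\bar P'+\frac{\delta}{s}\bar P$ is positive for large $s$ and $s_*$ may be fixed independently of $A$. If instead $(d+1)k\ge\delta$, this combination is negative, so $s_*$ must be chosen large in terms of $A$ to keep $\bar P$ a supersolution, while $A$ must be chosen large in terms of $s_*$ to dominate $P$ on $[0,s_*]$. This loop closes: substituting the constraint $s_*\sim(A^{1-1/k})^{1/(d(k-1)+k+1)}$ into $A\gtrsim K^k(1+s_*)^{(d+1)k}$ (with $K$ the bound of Proposition \ref{tetateta'bound}) leaves $A$ with the strictly positive net exponent $2/(d(k-1)+k+1)$, so a finite admissible $A$, depending continuously on $C_d$, indeed exists. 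For $k=1$ the circularity is absent, and one may alternatively integrate the linear equation for $\theta'$ with integrating factor $\mu(s)=s^\delta e^{bs^2/2}$ and estimate $\mu(s)^{-1}\int_{s_0}^{s}\mu(\tau)F(\theta(\tau))\,d\tau$ by its endpoint, obtaining $\theta'(s)=O\big(F(\theta(s))/s\big)=O\big((1+s)^{-d-1}\big)$ directly.
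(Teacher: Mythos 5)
Your argument is correct, but it is genuinely different from the paper's. The paper works with the integrating factor $f_S(s)=\exp\bigl(\frac{b}{k}\int_S^s\tau(\theta')^{1-k}d\tau\bigr)$, which depends on the solution itself; this forces it to first verify that $\tau(\theta')^{1-k}\in L^1_{loc}$ across the zeros of $\theta'$ (using the local expansion $(\theta')^{1-k}\sim\Lambda^{(1-k)/k}(s-\tilde{s})^{-1+1/k}$ and Proposition \ref{zeros}$(iii)$), after which the exact identity for $(s^\ell f_S(\theta'-Es^{-1}\theta))'$ with $\ell>1+d$, $E=\ell-\delta/k>0$ is integrated once and the monotonicity $f_S'\geq 0$ is used to discard terms; this yields an explicit constant $\tilde{C}_d$ in one pass. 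You instead pass to $P=\abs{\theta'}^k$, which is legitimate: $(\theta')^k\in C^1$ by \eqref{eq:maineq} and \eqref{thetaprimezero}, so $P=\abs{(\theta')^k}$ is locally Lipschitz and your differential inequality $P'\le B_d(1+s)^{-d}-bsP^{1/k}$ holds a.e. (note, though, that your justification ``$\theta'$ is of class $C^1$'' is not accurate for $k>1$, since $\theta''=\frac{1}{k}(\theta')^{1-k}((\theta')^k)'$ blows up at zeros of $\theta'$; what you actually need and have is $C^1$-regularity of $(\theta')^k$). The one-sided monotonicity of $y\mapsto -bsy^{1/k}$ makes the comparison with the barrier $A(1+s)^{-(d+1)k}$ valid despite the non-Lipschitz root, and your bookkeeping of the $A$ versus $s_*$ loop is right: the net exponent $2/(d(k-1)+k+1)>0$ does close it, and all constants are continuous in $C_d$ via Proposition \ref{tetateta'bound}. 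What your route buys is that it avoids entirely the integrability discussion for $(\theta')^{1-k}$ at the zeros of $\theta'$, replacing it with a standard barrier argument; what it costs is the bootstrapping between $A$ and $s_*$ for $k>1$ and less explicit constants. Part $(ii)$ goes through as you indicate by starting the comparison at $s_1$ and enlarging $A$ to dominate $P$ on the initial compact interval.
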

\begin{proof}
\begin{itemize}
\item[$(i)$] Let $\theta$ be a nontrivial solution and let $s\geq S\geq 0$. We define
\begin{equation}\label{eq:efeS}
f_S(s)=\exp\left(\frac{b}{k}\int_S^s\tau(\theta')^{1-k}d\tau\right).
\end{equation}
By Proposition \ref{zeros}, $(iii)$, the function $\theta$ has a finite number of isolated zeros and either there exists a first $\bar{s}>0$ such that $\theta(\bar{s})=\theta'(\bar{s})=0$, or $\theta$ has no zero for large $s$, and we set $\bar{s}=\infty$. In the last case, by Proposition \ref{uu'vanish}, the set of zeros of $\theta'$ is bounded. If $\theta'(\tilde{s})=0$ for some $\tilde{s}\in(0,\bar{s})$, then $((\theta')^k)'$ has a nonzero limit $\Lambda$ at $\tilde{s}$ by \eqref{eq:maineq}, whence $\tilde{s}$ is an isolated zero of $\theta'$ and
\[
(\theta'(s))^{1-k}=\Lambda^\frac{1-k}{k}(s-\tilde{s})^{-1+\frac{1}{k}}(1+o(1))
\]
near $\tilde{s}$. Thus $\tau(\theta')^{1-k}\in L^1_{loc}(S,\infty)$, it follows that $f_S$ is absolutely continuous on $[S,\bar{s})$ if $\bar{s}=\infty$. Let $\ell=\ell(n,k,d)>0$ be a parameter such that $E=\ell-\frac{\delta}{k}>0$ and $\ell>1+d$ (recall that $\delta$ is given in \eqref{eq:delta}). By a simple computation, for almost any $s\in (S,\bar{s})$, we have
\[
(s^\ell f_S(\theta'-Es^{-1}\theta))'=-E(\ell-1)s^{\ell-2}f_S\theta-s^{\ell-1}f_S'\theta(E+b^{-1}(\kappa+c_{n,k}^{-1}\abs{\theta}^{q-1}))
\]
and then for any $s\in [S,\bar{s})$ we get
\begin{equation}\label{eq:integral}
\begin{split}
s^\ell f_S\theta'&=S^{\ell-1}(S\theta'(S)-E\theta(S))+Es^{\ell-1}f_S\theta-E(\ell-1)\int_S^s\tau^{\ell-2}f_S\theta d\tau\\
&-\int_S^s\tau^{\ell-1}f_S'\theta(E+b^{-1}(\kappa+c_{n,k}^{-1}\abs{\theta}^{q-1}))d\tau
\end{split}
\end{equation}
Assume \eqref{eq:thetabound}, take $S=0$, and divide by $f_0$. From our choice of $\ell$, and since $f_0'\geq 0$, we obtain
\[
s^\ell\abs{\theta'(s)}\leq\tilde{C}_ds^{\ell-1-d}
\]
on $[0,\tilde{s})$ and then on $[0,\infty)$, where $\tilde{C}_d=C_d\left(2E+\frac{E(\ell-1)}{\ell-1-d}+b^{-1}(\kappa+c_{n,k}^{-1}C_d^{q-1})\right)$, and $E=E(n,k,d)$. This holds in particular on $[1,\infty)$. On $[0,1]$, from Proposition \ref{tetateta'bound},
\[
 \abs{\theta'(s)}\leq\left(\frac{k+1}{k}\right)^\frac{1}{k+1}\left(\frac{\kappa}{2}C_d^2+\frac{c_{n,k}^{-1}}{q+1}C_d^{q+1}\right)^\frac{1}{k+1},
\]
and \eqref{eq:thetaprimabound} holds.

\item[$(ii)$] Let $S\geq 1$ such that $\theta$ is defined on $[S,\infty)$ and $\theta(s)\leq C_ds^{-d}$ on $[S,\infty)$. Defining $\bar{s}$ as above, dividing \eqref{eq:integral} by $f_S$ and observing from \eqref{eq:efeS} that $f_S(s)\geq 1$ and also that $S^{\ell-1-d}\leq s^{\ell-1-d}$, we deduce
\[
s^\ell \abs{\theta'(s)}\leq S^\ell\abs{\theta'(S)}+EC_dS^{\ell-1-d}+\tilde{C}_ds^{\ell-1-d}\leq (S^{1+d}\abs{\theta'(S)}+EC_d+\tilde{C}_d)s^{\ell-1-d}
\]
on $[S,\bar{s})$ and then on $[S,\infty)$, and the conclusion follows again.
\end{itemize}
\end{proof}

\begin{proposition}\label{pbehainf2}
\begin{itemize}
\item [$(i)$] For any $\mu\geq 0$, any solution of \eqref{eq:goveq} satisfies, near $\infty$,
\begin{equation}\label{eq:boundinf1}
v(r)=O(r^{-\mu})+O(r^{-\kappa})
\end{equation}

\item[$(ii)$] The solution $v=v(\cdot,\gamma)$ of problem \eqref{eq:goveq}-\eqref{eq:inicond} satisfies
\begin{equation}\label{eq:boundinf2}
\abs{v(r,\gamma)}\leq C_{\mu}(\gamma)((1+r)^{-\mu}+(1+r)^{-\kappa}),
\end{equation}
where $C_{\mu}(\gamma)$ is continuous with respect to $\gamma$ on $\mathbb{R}$.
\end{itemize}
\end{proposition}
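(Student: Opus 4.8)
The plan is to prove \eqref{eq:boundinf2} (which contains \eqref{eq:boundinf1}) by transporting the problem to the equivalent equation \eqref{eq:newgoveq} through \eqref{eq:cv} and running a bootstrap on the decay exponent of $v$, built on the function $J_\kappa$ of \eqref{eq:Jotakappa}--\eqref{eq:Jotakappaprima}. If $\kappa\le 0$ there is nothing to do: by Proposition \ref{tetateta'bound} transported through \eqref{eq:cv} one has $\abs{v(r)}\le\gamma\le\gamma r^{-\kappa}$ for $r\ge 1$, so $v=O(r^{-\kappa})$ near infinity and \eqref{eq:boundinf2} holds a fortiori. So assume $\kappa>0$.

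\emph{The bootstrap step.} Suppose $\abs{v(r)}\le C(1+r)^{-d}$ near infinity for some $0\le d<\kappa$. Through \eqref{eq:cv} this reads $\theta(s)=O(s^{-d/b})$, so Proposition \ref{pbehainf1} gives $\theta'(s)=O(s^{-d/b-1})$, i.e. $\abs{v'(r)}\le D(1+r)^{-d-1}$ near infinity with $D$ controlled by $C$. Inserting these into \eqref{eq:Jotakappaprima} yields $J_\kappa'(r)=O(r^{\kappa-1-2k-kd})+O(r^{\kappa-1-qd})$ near infinity; integrating and then using $v=r^{-\kappa}J_\kappa(r)-r^{-k}(v')^k$ (from \eqref{eq:Jotakappa}) we obtain either that $J_\kappa$ has a finite limit --- whence $r^{-k}(v')^k=o(r^{-\kappa})$ and $v=O(r^{-\kappa})$, so \eqref{eq:boundinf2} holds for every $\mu$ --- or the improved bound $\abs{v(r)}=O(r^{-d'})$ with $d'=\min\{2k+kd,\,qd\}$. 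Since $2k+kd>d$ for every $k\ge 1$ and $qd>d$ whenever $d>0$ (recall $q>k\ge 1$), the exponent strictly increases at each pass, and finitely many passes push it past any prescribed $\mu<\kappa$ or into the regime where $J_\kappa$ converges.

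\emph{The base case.} The recursion stalls at $d=0$, since $\min\{2k,0\}=0$, so a first \emph{positive} exponent must be produced separately. For this I would use a Pohozaev-type function $\mathcal{V}=\mathcal{V}_{\lambda,\sigma,\mu}$ from \eqref{eq:VPoho} with $\lambda>0$ small and $\sigma,\mu\ge 0$ suitably chosen (e.g. $\mu=b\sigma+\kappa$ to simplify the quadratic terms) so that, in the displayed expression for $s^{1-\lambda}\mathcal{V}'$, all terms become nonpositive for $s$ large: one uses $\abs{\theta}\le\gamma$ and $\abs{\theta'}\le D_0(1+s)^{-1}$ from Propositions \ref{tetateta'bound} and \ref{pbehainf1}, and absorbs the only term of indefinite sign, $\sigma(\lambda-\delta-1)s^{-1}\theta(\theta')^k$, into the $(\theta')^{k+1}$ and $\theta^2$ terms by Young's inequality, noting that the resulting coefficients carry a factor $s^{-1}$ and hence are small. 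Then $\mathcal{V}$ is nonincreasing on some $[S_0,\infty)$, hence bounded there; as the remaining terms of $\mathcal{V}$ are nonnegative up to an $O(s^{-1-k})$ error, this forces $\theta(s)^2\lesssim s^{-\lambda}$, i.e. $\abs{v(r)}=O(r^{-b\lambda/2})$ near infinity, which launches the bootstrap.

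\emph{Part (ii).} The continuous dependence follows by carrying the constants along: $\gamma$, the energy value $\mathcal{E}(0)=\frac{\kappa}{2}\gamma^2+\frac{c_{n,k}^{-1}}{q+1}\gamma^{q+1}$, the constants of Proposition \ref{pbehainf1} (continuous in their input), the threshold $S_0$ and the value $\mathcal{V}(S_0)$ (continuous in $\gamma$ via the a priori bounds on $[0,S_0]$), are all continuous in $\gamma$; composing them through the finitely many steps yields $C_\mu(\gamma)$ continuous in $\gamma$ with $\abs{v(r,\gamma)}\le C_\mu(\gamma)((1+r)^{-\mu}+(1+r)^{-\kappa})$. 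I expect the main obstacle to be the base case: selecting $(\lambda,\sigma,\mu)$ that make $\mathcal{V}$ monotone is a delicate balancing of the quadratic-form coefficients in $s^{1-\lambda}\mathcal{V}'$ (as in \cite{BV06}), and it must be done while keeping all the constants continuous in $\gamma$, so that (i) and (ii) come out together. Once a single positive decay exponent is in hand, the bootstrap of the second paragraph is routine.
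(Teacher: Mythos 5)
Your argument is correct in outline but takes a genuinely different route from the paper's. The paper runs the bootstrap on the function $\mathcal{F}(r)=\frac{1}{2}v^2+r^{-k}(v')^kv$ weighted by $r^{2\kappa}$: in the identity for $(r^{2\kappa}\mathcal{F})'$ the nonlinear term $-c_{n,k}^{-1}r^{2\kappa-1}\abs{v}^{q+1}$ enters with a favorable sign and is simply discarded, and the surviving terms involve only $v'$, which by Proposition \ref{pbehainf1} already decays like $r^{-1}$ when $v$ is merely bounded; consequently the paper's iteration $d_{n+1}=\frac{(d_n+1)(k+1)+k-1}{2}$ self-starts at $d_0=0$ and yields $d_1=k>0$ in one pass, so no separate base case is needed. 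Your $J_\kappa$-based step keeps the nonlinear term, which contributes only $O(1)$ when $v$ is bounded --- hence the stall at $d=0$ that you correctly diagnose. Your proposed remedy does work: with $\mu=b\sigma+\kappa$, $\sigma\in(0,\delta)$ fixed and $\lambda>0$ small, the coefficient of $-\theta^2$ in $s^{1-\lambda}\mathcal{V}'$ equals $\kappa\sigma-\frac{\lambda(b\sigma+\kappa)}{2}>0$ (here $\kappa>0$ is used), the square term is $-b(s\theta')^2\leq 0$, and the indefinite term $\sigma(\lambda-\delta-1)s^{-1}\theta(\theta')^k$ is absorbed by Young's inequality for $s$ large since $\abs{\theta}\leq\gamma$; this gives $\theta^2=O(s^{-\lambda})$ and launches your recursion $d_{n+1}=\min\{2k+kd_n,qd_n\}$, whose increments are bounded below by $\min\{2k,(q-1)d_1\}>0$, so finitely many passes suffice. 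The trade-off is clear: your route needs an extra layer of machinery (the Pohozaev base case) that the paper's choice of $\mathcal{F}$ makes unnecessary, but it works directly with $J_\kappa$, which the paper only brings in later for Proposition \ref{propoinf}. Two loose ends you should tie up, neither a substantive gap: dispose of the borderline exponents $2k+kd_n=\kappa$ or $qd_n=\kappa$, where integrating $J_\kappa'$ produces a logarithm (the paper perturbs its sequence for the analogous reason); and for part (ii), the continuity in $\gamma$ of $S_0$ and $\mathcal{V}(S_0)$ rests on continuous dependence of $\theta(\cdot,\gamma)$ on compact intervals, which is available from the contraction-mapping construction but should be invoked explicitly.
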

\begin{proof}
\begin{itemize}
\item[$(i)$] Define, for $r>0$, the function 
\[
\mathcal{F}(r)=\frac{1}{2}v^2+r^{-k}(v')^kv.
\]
From \eqref{eq:goveq}, the function $\mathcal{F}$ satisfies
\begin{eqnarray*}
(r^{2\kappa}\mathcal{F}(r))'&=&r^{2\kappa-k}((v')^{k+1}+(2\kappa-n)r^{-1}(v')^kv-c_{n,k}^{-1}r^{k-1}\abs{v}^{q+1})\\
&\leq&r^{2\kappa-k}((v')^{k+1}+(2\kappa-n)r^{-1}(v')^kv).
\end{eqnarray*}
Suppose that for some $d\geq 0$ and $R>0$, $\abs{v(r)}\leq Cr^{-d}$ on $[R,\infty)$. Then by the change of variables \eqref{eq:cv} and by Proposition \ref{pbehainf1} there exists others constants $C>0$ such that $(r^{2\kappa}\mathcal{F})'\leq Cr^{2\kappa-k-(d+1)(k+1)}$ on $[R,\infty)$. Then $\mathcal{F}(r)\leq C(r^{-(k-1)-(d+1)(k+1)}+r^{-2\kappa})$ on $[R,\infty)$ if $(d+1)(k+1)+k-1\neq 2\kappa$. Moreover, $r^{-k}\abs{v'}^k\abs{v}\leq Cr^{-(k-1)-(d+1)(k+1)}$, which leads to 
\[
\abs{v(r)}\leq C(r^{-\frac{k-1+(d+1)(k+1)}{2}}+r^{-\kappa})
\]
on $[R,\infty)$. By Proposition \ref{uu'vanish}, $\theta$ is bounded on $[b^{-1}R^b,\infty)$, it follows that $v$ is bounded on $[R,\infty)$. We conclude by a recursive argument considering the sequence $(d_n)$ defined by $d_0=0,\, d_{n+1}=\frac{(d_n+1)(k+1)+k-1}{2}$. Since $k>1$, it is increasing and tends to $\infty$. After a finite number of steps, we get \eqref{eq:boundinf1} by changing slightly the sequence if it takes the value $\frac{2\kappa-k+1}{k+1}-1$.

\item[$(ii)$] By Proposition \ref{tetateta'bound}, $\abs{v(\cdot,\gamma)}=\abs{\theta(\cdot,\gamma)}\leq\gamma$. Suppose that for some $d>0, 
\abs{v(r,\gamma)}\leq C_d(\gamma)(1+r)^{-d}$ on $[0,\infty)$ and $C_d$ is continuous. Using again the change of variables \eqref{eq:cv}, we deduce that $
\abs{\theta(\tilde{s},\gamma)}\leq C_d(\gamma)(1+\tilde{s})^{-d}$ on $[0,\infty)$, where $\tilde{s}=(bs)^\frac{1}{b}$, then by Proposition \ref{pbehainf1}
\[
\abs{v(r,\gamma)}= \abs{\theta(\tilde{s},\gamma)}\leq\tilde{C}_d(\gamma)((1+\tilde{s})^{-\frac{k-1+(d+1)(k+1)}{2}}+(1+\tilde{s})^{-\kappa}),
\]
where $\tilde{C}_d$ is also continuous. We deduce \eqref{eq:boundinf2} as above, and $C_\mu$ is continuous, since we use a finite number of steps.
\end{itemize}
\end{proof}

\section{Asymptotic behavior at infinity}
In order to get the precise asymptotic behavior near infinity of the solutions $v$ of problem \eqref{eq:goveq}-\eqref{eq:inicond}, it is first necessary to show that the limit $\lim\limits_{r\rightarrow\infty}r^{\kappa}v$ exists and is finite. For this, as we mentioned in Section \ref{cvef}, we will use the function $J_\kappa$ defined in \eqref{eq:Jotakappa}. Once we know that this limit exists as a real number, we prove Propositions \ref{cs} and \ref{asymbeha} for the solutions $\theta$ of the equivalent problem \eqref{eq:newgoveq}-\eqref{eq:newinicond}, then results concerning $v$ can be extracted from $\theta$ using the transformation \eqref{eq:cv}.

\begin{proposition}\label{propoinf}
For any solution $v$ of equation \eqref{eq:goveq}, the limit $\lim\limits_{r\rightarrow\infty}r^{\kappa}v$ alway exists and is finite.
\end{proposition}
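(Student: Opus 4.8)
The plan is to exploit the auxiliary function $J_\kappa$ introduced in \eqref{eq:Jotakappa}, together with the polynomial decay estimates of Propositions \ref{pbehainf1} and \ref{pbehainf2}. Concretely, I will show that $J_\kappa'$ is absolutely integrable near $\infty$, which forces $J_\kappa(r)$ to converge to a finite limit $\ell$; then, writing $r^\kappa v=J_\kappa-r^{\kappa-k}(v')^k$ and checking that the last term is $o(1)$, I conclude that $\lim_{r\to\infty}r^\kappa v(r)=\ell$. This $\ell$ is exactly $L(\gamma)$ for the solution of problem \eqref{eq:goveq}--\eqref{eq:inicond}.

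First I would dispose of the trivial range $\kappa\le 0$: by Proposition \ref{uu'vanish}, carried over to $v$ through the change of variables \eqref{eq:cv}, any solution $v$ is bounded near $\infty$ and satisfies $v(r)\to 0$, so $r^\kappa v(r)\to 0$ when $\kappa<0$ and $r^\kappa v(r)=v(r)\to 0$ when $\kappa=0$. From now on assume $\kappa>0$. For such $\kappa$, Proposition \ref{pbehainf2}$(i)$ gives, for every $\mu\ge 0$, $v(r)=O(r^{-\mu})+O(r^{-\kappa})$ near $\infty$; transporting each term through \eqref{eq:cv} (so $v(r)=O(r^{-d})$ becomes $\theta(s)=O(s^{-d/b})$ with $b=2k/(k+1)$) and invoking Proposition \ref{pbehainf1}$(ii)$ yields $\theta'(s)=O(s^{-d/b-1})$, hence $v'(r)=O(r^{-d-1})$; thus $v'(r)=O(r^{-\mu-1})+O(r^{-\kappa-1})$ for every $\mu\ge 0$.

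Next I estimate $J_\kappa'$. From \eqref{eq:Jotakappaprima},
\[
\abs{J_\kappa'(r)}\le\abs{\kappa-n}\,r^{\kappa-1-k}\abs{v'(r)}^k+c_{n,k}^{-1}\,r^{\kappa-1}\abs{v(r)}^q .
\]
Inserting the bounds above, the exponents that occur on the right are $\kappa-1-2k-k\mu$ and $\kappa-1-2k-k\kappa$ (first term) and $\kappa-1-q\mu$ and $\kappa-1-q\kappa$ (second term). Since $k\ge 1$, $q>k$ and $\kappa>0$, the two $\kappa$-exponents equal $-(1+2k)-\kappa(k-1)\le-(1+2k)<-1$ and $-1-\kappa(q-1)<-1$, while choosing $\mu$ large (e.g. $\mu>\kappa$) makes the two $\mu$-exponents $<-1$ as well. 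Hence $J_\kappa'\in L^1([1,\infty))$, so $J_\kappa(r)\to\ell\in\RR$ as $r\to\infty$. With $\mu>\kappa$ we also have $v'(r)=O(r^{-\kappa-1})$, so $r^{\kappa-k}\abs{v'(r)}^k=O(r^{-2k-\kappa(k-1)})\to 0$, and therefore $r^\kappa v(r)=J_\kappa(r)-r^{\kappa-k}(v'(r))^k\to\ell$, which is finite.

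The substantive input is the polynomial decay furnished by Propositions \ref{pbehainf1}--\ref{pbehainf2}; granting that, the proof is essentially exponent bookkeeping. The one point requiring care is that the decay bounds carry an irreducible $O(r^{-\kappa})$ tail that cannot be improved by enlarging $\mu$, so one must verify separately that the exponents this tail produces in $J_\kappa'$ (and in $r^{\kappa-k}(v')^k$) are already strictly below $-1$ (resp. negative) — and this is precisely where the hypotheses $q>k$ and $\kappa>0$ are used.
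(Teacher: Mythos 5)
Your proof is correct and follows essentially the same route as the paper: both use the decay estimates $v(r)=O(r^{-\kappa})$, $v'(r)=O(r^{-\kappa-1})$ from Propositions \ref{pbehainf1} and \ref{pbehainf2} to show via \eqref{eq:Jotakappaprima} that $J_\kappa'$ is integrable at infinity, hence $J_\kappa$ converges, and then remove the $r^{\kappa-k}(v')^k$ term to conclude. Your separate treatment of $\kappa\le 0$ and the explicit tracking of the $O(r^{-\mu})$ tail are harmless refinements of the same exponent bookkeeping the paper performs.
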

\begin{proof}
By \eqref{eq:cv} and Propositions \ref{pbehainf1} and \ref{pbehainf2}, $v(r)=O(r^{-\kappa})$ and $v'(r)=O(r^{-\kappa-1})$ near infinity. Consider the function $J_\kappa$ defined in \eqref{eq:Jotakappa}. Then by \eqref{eq:Jotanprima} and \eqref{eq:Jotakappaprima}, $J_\kappa'$ is integrable at infinity. Indeed $r^{\kappa-1-k}\abs{v'}^k=O(r^{(1-k)\kappa-2k-1})$ and $r^{\kappa-1}\abs{v}^{q-1}v=O(r^{-1-\kappa(q-1)})$. Hence $J_\kappa$ has a limit, say $L$, as $r\rightarrow\infty$. So
\[
r^\kappa v=J_\kappa(r)-r^{\kappa-k}(v')^k=J_\kappa(r)+O(r^{(1-k)\kappa-2k}).
\]
As a consequence $\lim\limits_{r\rightarrow\infty}r^\kappa v(r)=L$ and
\begin{equation}\label{eq:ele}
L=J_\kappa(r)+\int_r^\infty J_\kappa'(\tau)d\tau.
\end{equation}
In case of fast decay solutions, that is, when $L=0$, and thus $\lim\limits_{r\rightarrow\infty}J_\kappa(r)=0$ it is possible to give more precise estimates on these solutions. Indeed, since $J_\kappa(r)=-\int_r^\infty J_\kappa'(\tau)d\tau$, we obtain
\begin{equation}\label{eq:thetaboundinfty}
\abs{v(r)}\leq r^{-k}\abs{v'}^k+r^{-\kappa}\int_r^\infty\tau^{\kappa-1}(c_{n,k}^{-1}\abs{v}^q+(n+\kappa)\tau^{-k}\abs{v'}^k)d\tau.
\end{equation}
Consider $d\geq 0$ such that $v(r)=O(r^{-d})$. Using Proposition \ref{pbehainf1} and \eqref{eq:cv}, also $v'(r)=O(r^{-d-1})$. Then $v(r)=O(r^{-k(d+2)})+O(r^{-qd})$ by \eqref{eq:thetaboundinfty}. As before, we may consider a sequence letting $d_0=\kappa$ and $d_{n+1} = \min\{k(d_n+2),qd_n\}$, it follows that $(d_n)$ is nondecreasing and tends to infinity. Finally, since $q>k$, we conclude that
\[
v(r)=o(r^{-d}),\;\mbox{for any }\; d\geq 0.
\]
\end{proof}

\begin{remark}\label{depeconti}
Using Propositions \ref{pbehainf1} and \ref{pbehainf2}, and the expression of $L=L(\gamma)$ in terms of the function $J_\kappa$ given in $\eqref{eq:ele}$, we can get an analogous of \cite[Theorem 3.5]{BV06}. Hence, the function $\gamma\mapsto L(\gamma)$ is continuous on $\RR$. Furthermore, the family of functions $\{(1+r)^{\kappa}v(r,\gamma)\}_{\gamma>0,\; r\geq 0}$ is equicontinuous on $\RR$.
\end{remark}

In the following result we prove that for $k>1$ any fast decay solution necessarily has a compact support.
\begin{proposition}\label{cs}
Suppose that $k>1$. Let $\theta$ be any solution of \eqref{eq:newgoveq} such that $\lim\limits_{s\rightarrow\infty}s^{\tilde{\kappa}}\theta=0$, where $\tilde{\kappa}=\kappa/b$. Then $\theta$ has a compact support.
\end{proposition}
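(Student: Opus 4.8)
The plan is to argue by contradiction. Suppose $\theta$ is not compactly supported; then $\theta\not\equiv 0$ on $[m,\infty)$ for every $m>0$, so by Proposition~\ref{zeros}$(iii)$ (and the argument in its proof, since a non-isolated zero would force $\theta\equiv 0$ afterwards) $\theta$ has a fixed sign on some half-line $[s_0,\infty)$. Since $k$ is odd, equation \eqref{eq:newgoveq} is odd in $\theta$, so after replacing $\theta$ by $-\theta$ if necessary we may assume $\theta>0$ on $[s_0,\infty)$; then $\theta'<0$ on $[s_1,\infty)$ for some $s_1\ge s_0$ by Proposition~\ref{uu'vanish}. Finally, through the change of variables \eqref{eq:cv} the hypothesis $\lim_{s\to\infty}s^{\tilde\kappa}\theta=0$ is precisely the condition $L=0$ for the associated solution $v$ of \eqref{eq:goveq}, so Propositions~\ref{propoinf} and \ref{pbehainf1} give $\theta(s)=o(s^{-d})$ and $\theta'(s)=o(s^{-d})$ as $s\to\infty$ for every $d\ge 0$.

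The heart of the matter is to convert this super-polynomial decay into a \emph{lower} bound on $-\theta'$. I would use the function $J_\delta$ from \eqref{eq:jotadelta}: writing $J_\delta(s)=bs^{\delta+1}\theta(s)+s^{\delta}(\theta'(s))^{k}$, the decay of $\theta,\theta'$ shows $J_\delta(s)\to 0$, while $J_\delta'(s)=s^{\delta}(n-\kappa-c_{n,k}^{-1}\theta^{q-1})\theta$ by \eqref{eq:jotadeltaprima}. Assume first $\kappa<n$. Since $\theta\to 0$, we have $n-\kappa-c_{n,k}^{-1}\theta^{q-1}>0$ for $s$ large, so $J_\delta$ is eventually increasing; being increasing with limit $0$ it satisfies $J_\delta(s)\le 0$ for all large $s$, and since $k$ is odd this reads $bs\,\theta(s)\le|\theta'(s)|^{k}$, that is $-\theta'(s)\,\theta(s)^{-1/k}\ge(bs)^{1/k}$. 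This is the degenerate estimate that makes everything work. Because $k>1$, the exponent $1-\tfrac1k$ lies in $(0,1)$, so with $\psi:=\theta^{\,1-1/k}\ge 0$ we get $\psi'=\tfrac{k-1}{k}\theta^{-1/k}\theta'\le-\tfrac{k-1}{k}(bs)^{1/k}$, and integrating from $s_1$ onward forces the nonnegative function $\psi$ to become negative in finite $s$ — a contradiction. Hence for $\kappa<n$ no nontrivial solution with $L=0$ can be eventually of one sign, and Proposition~\ref{zeros}$(iii)$ gives $\theta\equiv 0$ on some $[m,\infty)$, i.e.\ $\theta$ has compact support.

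For the range $\kappa\ge n$ (which, in the application, corresponds to $q\le q_c(k)$) the sign of $J_\delta'$ is reversed: $J_\delta$ is eventually decreasing to $0$, so $J_\delta(s)\ge 0$, and this yields only the useless reverse bound $|\theta'(s)|^{k}\le bs\,\theta(s)$. Here I would instead use a Pohozaev-type function $\mathcal{V}_{\lambda,\sigma,\mu}$ as in \eqref{eq:VPoho}, choosing the parameters $\lambda,\sigma,\mu$ so that, from the computed expression for $s^{1-\lambda}\mathcal{V}'_{\lambda,\sigma,\mu}$, the function $\mathcal{V}_{\lambda,\sigma,\mu}$ stays nonnegative along the solution and satisfies a differential inequality $\mathcal{V}'\le-c\,s^{\lambda-1}\mathcal{V}^{\rho}$ with $\rho=\tfrac{k}{k+1}<1$; the integration argument of the previous paragraph then forces $\mathcal{V}$, hence $\theta$, to vanish at a finite $s$. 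An equivalent route is to integrate \eqref{eq:maineq} so as to obtain the identity
\[
bs\,\theta(s)=|\theta'(s)|^{k}+s^{1-\tilde\kappa}\Big(\tfrac{\kappa-n}{b}\int_s^{\infty}\tau^{\tilde\kappa-2}|\theta'|^{k}\,d\tau+c_{n,k}^{-1}\int_s^{\infty}\tau^{\tilde\kappa-1}\theta^{q}\,d\tau\Big),
\]
and then exploit the super-polynomial decay of $\theta$ and $\theta'$ to show that the correction term is of lower order than $s\,\theta(s)$, which restores $|\theta'(s)|^{k}\ge\tfrac{b}{2}\,s\,\theta(s)$ and hence the same degenerate estimate as before.

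The step I expect to be the main obstacle is precisely this last one: for $\kappa\ge n$ the natural monotone quantity $J_\delta$ delivers the differential inequality in the wrong direction, so one must either build, by a careful choice of $(\lambda,\sigma,\mu)$, a Pohozaev function whose three scalar coefficients and cross term can be made to have the right signs simultaneously — which may require a case split (for instance $n>2k$ versus $n\le 2k$) — or carry out the bootstrap of the integral identity above with enough care to dominate the correction term. Once the lower bound $-\theta'(s)\ge c\,s^{1/k}\theta(s)^{1/k}$ is in hand in this range, the extinction-in-finite-$s$ conclusion using $k>1$ is exactly the same routine integration, and the proof ends as in the case $\kappa<n$.
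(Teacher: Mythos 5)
Your argument for the case $\kappa<n$ is correct and essentially self-contained: the monotonicity of $J_\delta$ from \eqref{eq:jotadeltaprima} combined with $J_\delta\to 0$ gives $bs\,\theta\le|\theta'|^k$, and the extinction integration using $k>1$ (the same device the paper uses in Proposition~\ref{zeros}$(ii)$) finishes it. This is a genuinely different, and in that regime simpler, route than the paper's.

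However, the proposition must hold for \emph{all} $\kappa$ — in the application to Theorem~\ref{beha_type_I_sol}$(e)$ one has $\kappa_0\ge n$, and compact support of the fast solutions is claimed precisely there — and for $\kappa\ge n$ your proof is incomplete. You correctly identify the obstacle, but neither of your two proposed repairs is carried out, and the second one has a real gap: after writing $bs\,\theta=|\theta'|^k+s^{1-\tilde\kappa}\bigl(\frac{\kappa-n}{b}\int_s^\infty\tau^{\tilde\kappa-2}|\theta'|^k\,d\tau+c_{n,k}^{-1}\int_s^\infty\tau^{\tilde\kappa-1}\theta^q\,d\tau\bigr)$ (which is a correct identity), you need the correction to be $o(s\,\theta(s))$. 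Superpolynomial decay of $\theta$ and $\theta'$ bounds each tail integral by $o(s^{-d})$ for every $d$, but it does \emph{not} compare the tail $\int_s^\infty$ with the pointwise value $\theta(s)$ at the lower endpoint; a monotone, rapidly decaying $\theta$ can perfectly well satisfy $\int_s^\infty\tau^{\tilde\kappa-1}\theta\,d\tau\gg s^{\tilde\kappa}\theta(s)$ on a sequence $s\to\infty$ unless one has some regularity of the decay rate, which is exactly what is unknown here. The Pohozaev route is likewise only a hope: nothing in the computed expression for $s^{1-\lambda}\mathcal{V}'_{\lambda,\sigma,\mu}$ produces a power $\mathcal{V}^{k/(k+1)}$ on the right-hand side without further work.

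The paper's own proof avoids the dichotomy in $\kappa$ altogether, and it is worth seeing why: it applies the Emden--Fowler change of variables \eqref{eq:cvlog} with an \emph{arbitrarily large} weight $d>\tilde\kappa$ (legitimate because $\theta=o(s^{-d})$, $\theta'=o(s^{-d-1})$ for every $d$), shows via the maximum-point argument of Proposition~\ref{zeros}$(iii)$ that $y_d$ is eventually monotone, and then extracts from \eqref{eq:cveqn} the inequality $-\psi'\ge Ce^{((k-1)d+k+1)\tau}\psi^{2-k}$ for $\psi=-s^{d+1}\theta'>0$, whose integration is impossible when $k>1$. Your $J_\delta$ corresponds to the fixed weight $bd=n$, which is exactly why your sign condition degenerates at $\kappa=n$; replacing it by a weight with $bd>\kappa$ is the missing idea, and it is the content of the paper's argument. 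As written, your proof establishes the proposition only for $\kappa<n$.
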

\begin{proof}
Suppose that $\theta$ has no compact support. We can assume that $\theta>0$ for large $s$ by Proposition \ref{zeros}. We performs the change of variables \eqref{eq:cvlog} for some $d>\tilde{\kappa}$. Since $\theta(s)=o(s^{-d}),\, \theta'(s)=o(s^{-d-1})$ near $\infty$, we get $y_d(\tau)=o(1),\,y_d'(\tau)=o(1)$ near $\infty$. Further, the function $\psi=dy_d-y_d'=-s^{d+1}\theta'$ is positive for large $\tau$ by Proposition \ref{uu'vanish}. From \eqref{eq:cveqn} and recalling that $k$ is odd, we have
\begin{equation*}
\begin{split}
&y_d''+(\eta-2d)y_d'-d(\eta-d)y_d\\
&+k^{-1}e^{((k-1)d+k+1)\tau}\psi^{1-k}(by_d'-(bd-\kappa)y_d+c_{n,k}^{-1}e^{-d(q-1)\tau}\abs{y_d}^{q-1}y_d)=0.
\end{split}
\end{equation*}
As in Proposition \ref{zeros} the maximum points $\tau$ of $y_d$ remain in a bounded set, it follows that the function $y_d$ is monotone for large $\tau$, more precisely,  $y_d'(\tau)\leq 0$ and $\lim\limits_{\tau\rightarrow\infty}e^{((k-1)d+k+1)\tau}\psi^{1-k}=\lim\limits_{s\rightarrow\infty}s^2(\theta')^{1-k}=\infty$, hence
\[
ky_d''=e^{((k-1)d+k+1)\tau}\psi^{1-k}(b\abs{y_d'}(1+o(1))+b(d-\tilde{\kappa})y_d(1+o(1))).
\]
Since $d-\tilde{\kappa}>0$, there exists $C>0$ such that $y_d''\geq Ce^{((k-1)d+k+1)\tau}\psi^{2-k}$ for large $\tau$. Using this we arrive at the inequality
\[
-\psi'=y_d''+d\abs{y_d'}\geq Ce^{((k-1)d+k+1)\tau}\psi^{2-k},
\]
which implies that the function $\psi^{k-1}+Ce^{((k-1)d+k+1)\tau}/\left(d+\frac{k+1}{k-1}\right)$ is nonincreasing, which is impossible.
\end{proof}

The following result gives a precise asymptotic expansion of the slow decay solutions $\theta$ of equation \eqref{eq:newgoveq}, improving the results in Proposition \ref{uu'vanish}. Using the transformation \eqref{eq:cv} we transfer this information to the function $v$, thus obtaining the corresponding result for slow decay solutions $v$ of equation \eqref{eq:goveq}.

\begin{proposition}\label{asymbeha}
Let $\theta$ be any solution of \eqref{eq:newgoveq} such that $\tilde{L}=\lim\limits_{s\rightarrow\infty}s^{\tilde{\kappa}}\theta>0$, where $\tilde{\kappa}=\frac{\kappa}{b}$. Then
\begin{equation}\label{eq:derivainfty}
\lim\limits_{s\rightarrow\infty}s^{\tilde{\kappa}+1}\theta'=-{\tilde{\kappa}}\tilde{L},
\end{equation}
and 
\begin{equation}\label{thetaasym}
\theta(s)=\begin{cases}
s^{-\tilde{\kappa}}(\tilde{L}+(\tilde{A}+o(1))s^{-\tilde{\nu}}),&\mbox{if } (q-k)\tilde{\kappa}>k+1,\\
s^{-\tilde{\kappa}}(\tilde{L}+(\tilde{A}+\tilde{B}+o(1))s^{-\tilde{\kappa}(q-1)}),&\mbox{if } (q-k)\tilde{\kappa}=k+1,\\
s^{-\tilde{\kappa}}(\tilde{L}+(\tilde{B}+o(1))s^{-\tilde{\kappa} (q-1)}),&\mbox{if } (q-k)\tilde{\kappa}<k+1,
\end{cases}
\end{equation}
where 
\[
\tilde{\nu}=(k-1)\tilde{\kappa}+k+1,\;\tilde{A}=\frac{(k\tilde{\kappa}-(\tilde{n}-(k+1)))(\tilde{\kappa}\tilde{L})^k}{b\tilde{\nu}},\; \tilde{B}=\frac{\tilde{L}^q}{bc_{n,k}\tilde{\kappa}(q-1)}.
\]
Moreover, differentiating term to term gives an expansion of $\theta'$.
\end{proposition}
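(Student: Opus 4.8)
The plan is to establish the derivative limit \eqref{eq:derivainfty} first and then to read off \eqref{thetaasym} from an integral identity attached to a ``$J$-function'' whose weight is tuned to $\tilde{\kappa}$. Throughout one uses that, since $\tilde{L}>0$, we have $\theta>0$ and (Proposition \ref{uu'vanish}) $\theta'<0$ for large $s$, and $\theta=O(s^{-\tilde{\kappa}})$, hence $\theta'=O(s^{-\tilde{\kappa}-1})$ by Proposition \ref{pbehainf1}$(ii)$. In analogy with $J_\kappa$ in \eqref{eq:Jotakappa} I set $\tilde{J}(s)=s^{\tilde{\kappa}}(b\theta+s^{-1}(\theta')^k)$; differentiating and using \eqref{eq:maineq} together with $b\tilde{\kappa}=\kappa$ and $\delta+1=\tilde{n}$ gives
\[
\tilde{J}'(s)=(\tilde{\kappa}-\tilde{n})\,s^{\tilde{\kappa}-2}(\theta')^k-c_{n,k}^{-1}s^{\tilde{\kappa}-1}\abs{\theta}^{q-1}\theta .
\]
By the decay bounds, $s^{\tilde{\kappa}-1}(\theta')^k=O(s^{-\tilde{\nu}})\to 0$ and $\tilde{J}'$ is integrable near $\infty$, so $\tilde{J}$ converges; comparing $bs^{\tilde{\kappa}}\theta=\tilde{J}(s)-s^{\tilde{\kappa}-1}(\theta')^k$ with $s^{\tilde{\kappa}}\theta\to\tilde{L}$ identifies $\lim\tilde{J}=b\tilde{L}$, so that
\[
bs^{\tilde{\kappa}}\theta(s)=b\tilde{L}-s^{\tilde{\kappa}-1}(\theta')^k+\int_s^{\infty}\Big((\tilde{n}-\tilde{\kappa})\tau^{\tilde{\kappa}-2}(\theta')^k+c_{n,k}^{-1}\tau^{\tilde{\kappa}-1}\abs{\theta}^{q-1}\theta\Big)\,d\tau .
\]

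The main obstacle is the derivative limit \eqref{eq:derivainfty}, which is delicate because of the singular factor $(\theta')^{1-k}$. I would use \eqref{eq:cvlog} with $d=\tilde{\kappa}$: since $b\tilde{\kappa}-\kappa=0$ the system \eqref{eq:system} simplifies, and setting $P(s)=s^{\tilde{\kappa}+1}\theta'$ (so $Y_{\tilde{\kappa}}=-P^k$ and $\abs{Y_{\tilde{\kappa}}}^{(1-k)/k}Y_{\tilde{\kappa}}=-P$, because $k$ is odd) one obtains, for $\tau=\ln s$ large,
\[
\frac{d}{d\tau}(P^k)=k(\tilde{\kappa}-\eta)P^k-e^{\tilde{\nu}\tau}\Big(\kappa y_{\tilde{\kappa}}+c_{n,k}^{-1}e^{-\tilde{\kappa}(q-1)\tau}\abs{y_{\tilde{\kappa}}}^{q-1}y_{\tilde{\kappa}}+bP\Big),
\]
with $y_{\tilde{\kappa}}=s^{\tilde{\kappa}}\theta\to\tilde{L}$ and $P$, $P^k$ bounded. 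Since $e^{\tilde{\nu}\tau}\to\infty$, at any local extremum $\tau^{\ast}$ of $P^k$ the left side vanishes, forcing $\kappa y_{\tilde{\kappa}}(\tau^{\ast})+bP(\tau^{\ast})\to 0$, i.e. $P^k(\tau^{\ast})\to(-\tilde{\kappa}\tilde{L})^k$. If $P^k$ failed to converge, then for any $c$ with $\liminf P^k<c<\limsup P^k$ the function $P^k$ would cross the level $c$ infinitely often, producing infinitely many local maxima with value exceeding $c$ and infinitely many local minima with value below $c$; along each such sequence $P^k\to(-\tilde{\kappa}\tilde{L})^k$, whence $(-\tilde{\kappa}\tilde{L})^k=c$ for every admissible $c$ --- absurd. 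And if $P^k\to c_0\ne(-\tilde{\kappa}\tilde{L})^k$, then $\kappa\tilde{L}+bc_0^{1/k}\ne0$ makes $\frac{d}{d\tau}(P^k)\to\pm\infty$, contradicting boundedness. Hence $P^k\to(-\tilde{\kappa}\tilde{L})^k$, which is \eqref{eq:derivainfty}.

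Finally, substituting $\theta(s)=\tilde{L}s^{-\tilde{\kappa}}(1+o(1))$ and, via the previous step, $(\theta')^k(s)=s^{-(\tilde{\kappa}+1)k}\big((-\tilde{\kappa}\tilde{L})^k+o(1)\big)$ into the identity above and using $\int_s^{\infty}\tau^{-\tilde{\nu}-1}\,d\tau=s^{-\tilde{\nu}}/\tilde{\nu}$, the first two summands combine into $b\tilde{A}s^{-\tilde{\nu}}(1+o(1))$ --- the factor $1-\frac{\tilde{n}-\tilde{\kappa}}{\tilde{\nu}}=\frac{k\tilde{\kappa}-(\tilde{n}-(k+1))}{\tilde{\nu}}$ being exactly the numerator of $\tilde{A}$ --- while the last integral yields $b\tilde{B}s^{-\tilde{\kappa}(q-1)}(1+o(1))$. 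Dividing by $b$,
\[
\theta(s)=s^{-\tilde{\kappa}}\Big(\tilde{L}+(\tilde{A}+o(1))\,s^{-\tilde{\nu}}+(\tilde{B}+o(1))\,s^{-\tilde{\kappa}(q-1)}\Big),
\]
and since $\tilde{\nu}-\tilde{\kappa}(q-1)=(k+1)-(q-k)\tilde{\kappa}$, comparing $k+1$ with $(q-k)\tilde{\kappa}$ gives precisely the three alternatives of \eqref{thetaasym}. The term-by-term differentiated expansion of $\theta'$ is legitimate since $\theta$ is eventually monotone (alternatively, repeat the argument with $\tilde{J}'$ in place of $\tilde{J}$), and the corresponding statement for $v$ follows through \eqref{eq:cv}. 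I expect Step~2 to be the only genuinely nontrivial part; the rest is computational.
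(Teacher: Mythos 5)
Your proof of \eqref{eq:derivainfty} is essentially the paper's: both pass to the Emden--Fowler variables \eqref{eq:cvlog} with $d=\tilde{\kappa}$ and force $\lim Y_{\tilde{\kappa}}=(\tilde{\kappa}\tilde{L})^k$ by evaluating the $Y_{\tilde{\kappa}}$-equation at critical points, where the unbounded factor $e^{\tilde{\nu}\tau}$ kills the bracket; your $\liminf$/$\limsup$ level-crossing argument is just a more explicit version of the paper's ``monotone or infinitely many critical points'' dichotomy, and your blow-up argument in the monotone case replaces the paper's shorter observation that a convergent $y_{\tilde{\kappa}}$ forces $y_{\tilde{\kappa}}'\to 0$. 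For \eqref{thetaasym} you genuinely diverge. The paper stays inside the dynamical system: it proves additionally that $Y_{\tilde{\kappa}}'\to 0$ (by the same critical-point device applied to $Y_{\tilde{\kappa}}''$), reads off a two-term expansion of $y_{\tilde{\kappa}}'$ from the system, and integrates. You instead integrate the conservation-type identity for $\tilde{J}(s)=s^{\tilde{\kappa}}(b\theta+s^{-1}(\theta')^k)$ --- the same device the paper uses with $J_\kappa$ in Proposition \ref{propoinf} --- and insert the leading asymptotics of $\theta$ and $\theta'$ into the tail integral. I checked the computation: $\tilde{J}'$ is as you state (the terms $b\tilde{\kappa}s^{\tilde{\kappa}-1}\theta$ and $\kappa s^{\tilde{\kappa}-1}\theta$ cancel and $\tilde{\kappa}-1-\delta=\tilde{\kappa}-\tilde{n}$), the tails are integrable, the factor $1-(\tilde{n}-\tilde{\kappa})/\tilde{\nu}=(k\tilde{\kappa}-(\tilde{n}-(k+1)))/\tilde{\nu}$ reproduces $b\tilde{A}$, and the source integral gives $b\tilde{B}$. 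Your route to \eqref{thetaasym} is arguably cleaner, since it bypasses the $Y_{\tilde{\kappa}}'\to 0$ step entirely; its cost appears below.

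The one genuine gap is the final clause. Justifying the term-by-term differentiated expansion of $\theta'$ by ``$\theta$ is eventually monotone'' does not work: if $s^{\tilde{\kappa}}\theta=\tilde{L}+\tilde{A}s^{-\tilde{\nu}}+g(s)$ with $g=o(s^{-\tilde{\nu}})$, monotonicity of $\theta$ yields only a one-sided bound on $g'$ and does not give $g'=o(s^{-\tilde{\nu}-1})$. Your parenthetical repair (``repeat the argument with $\tilde{J}'$'') is also not immediate: the identity $s^{\tilde{\kappa}-1}(\theta')^k=\tilde{J}(s)-bs^{\tilde{\kappa}}\theta$ returns, at orders $s^{-\tilde{\nu}}$ and $s^{-\tilde{\kappa}(q-1)}$, only the \emph{leading} term of $(\theta')^k$ that you already know; extracting the second term of $\theta'$ would require a full second iteration of the scheme with second-order inputs in the tail integrals. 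The paper gets the derivative expansion for free precisely because it establishes the two-term asymptotics of $y_{\tilde{\kappa}}'$ first and obtains \eqref{thetaasym} by integration; to close your proof you should either carry out that bootstrap explicitly or, more economically, graft in the paper's step $Y_{\tilde{\kappa}}'\to 0$ and the resulting expansion of $y_{\tilde{\kappa}}'$. The main claims \eqref{eq:derivainfty} and \eqref{thetaasym} are, however, fully and correctly proved.
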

\begin{proof}
We performs the transformation \eqref{eq:cvlog} with $d=\tilde{\kappa}$, so that $\theta(s)=s^{-\tilde{\kappa}}y_{\tilde{\kappa}}(\tau)$. For large $s$, we have that $\theta'(s)=s^{-(\tilde{\kappa}+1)}(y_{\tilde{\kappa}}'(\tau)-\tilde{\kappa} y_{\tilde{\kappa}}(\tau))<0$. Then $\tilde{\kappa} y_{\tilde{\kappa}}-y_{\tilde{\kappa}}'>0$ for large $\tau$. Thus, taking into account \eqref{eq:cvsys}, the system \eqref{eq:system} takes the form:
\begin{equation}\label{eq:system2}
\begin{split}
&y_{\tilde{\kappa}}'=\tilde{\kappa} y_{\tilde{\kappa}}-Y_{\tilde{\kappa}}^{\frac{1}{k}},\\
&Y_{\tilde{\kappa}}'=k({\tilde{\kappa}}-\eta)Y_{\tilde{\kappa}}+e^{\tilde{\nu}\tau}(b({\tilde{\kappa}}y_{\tilde{\kappa}}-Y_{\tilde{\kappa}}^{\frac{1}{k}})+c_{n,k}^{-1}e^{-{\tilde{\kappa}}(q-1)\tau}y_{\tilde{\kappa}}^{q}).
\end{split}
\end{equation}
Obviously the function $y_{\tilde{\kappa}}$ converges to $\tilde{L}$, and $y_{\tilde{\kappa}}'$ is bounded near infinity, because $\theta'=O(s^{-(\tilde{\kappa}+1)})$ near infinity, whence $Y_{\tilde{\kappa}}$ is bounded. Therefore, we have only two alternatives, either $Y_{\tilde{\kappa}}$ is monotone for large $\tau$, thus having a finite limit, say $\lambda$, and consequently, $y_{\tilde{\kappa}}'$ converges to $\tilde{\kappa}\tilde{L}-\lambda^{\frac{1}{k}}$ being $\lambda=(\tilde{\kappa}\tilde{L})^k$, or else for large $\tau$, the critical points of $Y_{\tilde{\kappa}}$ form an increasing sequence $(\tau_m)$ tending to $\infty$. Then 
\[
bY^{\frac{1}{k}}_{\tilde{\kappa}}(\tau_m)=b{\tilde{\kappa}} y_{{\tilde{\kappa}}}(\tau_m)+c_{n,k}^{-1}e^{-\tilde{\kappa}(q-1)\tau_m}y_{\tilde{\kappa}}^q(\tau_m)+k(\tilde{\kappa}-\eta)e^{-\tilde{\nu}\tau_m}Y_{\tilde{\kappa}}(\tau_m).
\]
Hence $\lim\limits_{m\rightarrow\infty} Y_{\tilde{\kappa}}(\tau_m)=(\tilde{\kappa}\tilde{L})^k$. In either case $\lim\limits_{\tau\rightarrow\infty} Y_{\tilde{\kappa}}(\tau)=(\tilde{\kappa}\tilde{L})^k$, which is equivalent to \eqref{eq:derivainfty} and implies $\lim\limits_{\tau\rightarrow\infty}y'_{\tilde{\kappa}}(\tau)=0$. It is remain to consider $Y'_{\tilde{\kappa}}$. Either it is monotone for large $\tau$, thus $\lim\limits_{\tau\rightarrow\infty} Y_{\tilde{\kappa}}'(\tau)=0$; or for large $\tau$, the critical points of $Y_{\tilde{\kappa}}'$ form an increasing sequence $(s_m)$ tending to $\infty$. Using the fact that $Y_{\tilde{\kappa}}''(s_m)=0$, and by computation, at the point $\tau=s_m$, we obtain
\begin{eqnarray*}
&&\left(\frac{2}{k+1}Y_{\tilde{\kappa}}^{\frac{1-k}{k}}-k(\tilde{\kappa}-\eta)e^{-\tilde{\nu}\tau}\right)Y_{\tilde{\kappa}}'\\
&=&(k(\kappa+2)+qc_{n,k}^{-1}e^{-\tilde{\kappa}(q-1)\tau}y_{\tilde{\kappa}}^{q-1})y_{\tilde{\kappa}}'+c_{n,k}^{-1}(\tilde{\nu}-\tilde{\kappa}(q-1))e^{-\tilde{\kappa}(q-1)\tau}y_{\tilde{\kappa}}^{q}
\end{eqnarray*}
from which $\lim\limits_{m\rightarrow\infty} Y_{\tilde{\kappa}}'(s_m)=0$. In either case, $\lim\limits_{\tau\rightarrow\infty} Y_{\tilde{\kappa}}'(\tau)=0$. It follows from \eqref{eq:system2} that
\begin{eqnarray*}
y_{\tilde{\kappa}}'&=&-(bc_{n,k})^{-1}e^{-\tilde{\kappa}(q-1)\tau}y_{\tilde{\kappa}}^q-b^{-1}e^{-\tilde{\nu}\tau}(k(\tilde{\kappa}-\eta)Y_{\tilde{\kappa}}-Y'_{\tilde{\kappa}})\\
&=&-\left(\frac{\tilde{L}^q}{bc_{n,k}}+o(1)\right)e^{-\tilde{\kappa}(q-1)\tau}-\tilde{\nu}(A+o(1))e^{-\tilde{\nu}\tau}.
\end{eqnarray*}
Hence $y'_{\tilde{\kappa}}=-\tilde{\nu}(A+o(1))e^{-\tilde{\nu}\tau}$ if $\tilde{\kappa}(q-1)>\tilde{\nu}$, or equivalently $(q-k)\tilde{\kappa}>k+1$; and $y'_{\tilde{\kappa}}=-\left(\tilde{\nu} A+\frac{\tilde{L}^q}{bc_{n,k}}+o(1)\right)e^{-\tilde{\nu}\tau}$ if $\tilde{\kappa} (q-1)=\tilde{\nu}$; and $y'_{\tilde{\kappa}}=-\left(\frac{\tilde{L}^q}{bc_{n,k}}+o(1)\right)e^{-\tilde{\kappa}(q-1)\tau}$ if $\tilde{\kappa}(q-1)<\tilde{\nu}$. An integration easily leads to estimates in \eqref{thetaasym}. This gives also an expansion of the derivatives, by computing $\theta'=-s^{-(\tilde{\kappa}+1)}(\tilde{\kappa} y_{\tilde{\kappa}}-y_{\tilde{\kappa}}')$:
\begin{equation*}
\theta'(s)=\begin{cases}
-s^{-(\tilde{\kappa}+1)}(\tilde{\kappa}\tilde{L}+(\tilde{\kappa}+\tilde{\nu})(\tilde{A}+o(1))s^{-\tilde{\nu}}),&\mbox{if } (q-k)\tilde{\kappa}>k+1,\\
-s^{-(\tilde{\kappa}+1)}(\tilde{\kappa}\tilde{L}+(\tilde{\kappa}+\tilde{\nu})(\tilde{A}+\tilde{B}+o(1))s^{-\tilde{\nu}}),&\mbox{if } (q-k)\tilde{\kappa}=k+1,\\
-s^{-(\tilde{\kappa}+1)}(\tilde{\kappa}\tilde{L}+\tilde{\kappa} q(\tilde{B}+o(1))s^{-\tilde{\kappa} (q-1)}),&\mbox{if } (q-k)\tilde{\kappa}<k+1,
\end{cases}
\end{equation*}
which correspond to a derivation term to term.
\end{proof}

Using \eqref{eq:cv} the preceding proposition translate for $v$ as
\begin{proposition}\label{asymbehav}
Let $v$ be any solution of \eqref{eq:goveq} such that $L=\lim\limits_{r\rightarrow\infty}r^{\kappa}v>0$. Then
\[
\lim\limits_{r\rightarrow\infty}r^{\kappa+1}v'=-\kappa L,
\]
and 
\begin{equation}\label{vasym}
v(r)=\begin{cases}
r^{-\kappa}(L+(A+o(1))\left(\frac{2k}{k+1}\right)^{\frac{(k+1)(\kappa+2)}{2}}r^{-\nu}),&\mbox{if } (q-k)\kappa>2k,\\
r^{-\kappa}(L+(A+B+o(1))\left(\frac{2k}{k+1}\right)^{\frac{(k+1)\kappa q}{2k}}r^{-\kappa (q-1)}),&\mbox{if } (q-k)\kappa=2k,\\
r^{-\kappa}(L+(B+o(1))\left(\frac{2k}{k+1}\right)^{\frac{(k+1)\kappa q}{2k}}r^{-\kappa (q-1)}),&\mbox{if } (q-k)\kappa<2k,
\end{cases}
\end{equation}
where 
\[
\nu=(k-1)\kappa+2k,\;A=\frac{(k\kappa-(n-2k))\left(\frac{k+1}{2k}\right)^{\frac{(k+1)(\kappa+2)}{2}}(\kappa L)^k}{\nu},\; B=\frac{\left(\frac{k+1}{2k}\right)^{\frac{(k+1)\kappa q}{2k}}L^q}{\kappa (q-1)c_{n,k}}.
\]
Moreover, an asymptotic expansion of $v'$ is also provided by Proposition \ref{asymbeha}, since $v'(r)=r^{b-1}\theta'(s)$. 
\end{proposition}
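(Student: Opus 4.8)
\textit{Proof of Proposition~\ref{asymbehav} (sketch).}
The plan is to deduce the statement directly from Proposition~\ref{asymbeha} by unwinding the change of variables \eqref{eq:cv}, namely $v(r)=\theta(s)$ with $s=ar^{b}$, $a=b^{-1}$ and $b=\tfrac{2k}{k+1}$. No new analysis is needed: the hard limiting arguments have already been carried out for $\theta$, and what remains is to record how the quantities attached to $\theta$ transform into those attached to $v$, and then substitute.

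First I would set up the dictionary. Since $\tilde{\kappa}=\kappa/b$ one has $b\tilde{\kappa}=\kappa$ and $s^{\tilde{\kappa}}=(b^{-1}r^{b})^{\tilde{\kappa}}=b^{-\tilde{\kappa}}r^{\kappa}$, so $s^{\tilde{\kappa}}\theta(s)=b^{-\tilde{\kappa}}r^{\kappa}v(r)$; letting $r\to\infty$ gives $\tilde{L}=b^{-\tilde{\kappa}}L$, in particular $L>0\iff\tilde{L}>0$, so the hypothesis is the same for both problems. A direct computation gives $b\tilde{\nu}=(k-1)\kappa+2k=\nu$ and $b\,\tilde{\kappa}(q-1)=\kappa(q-1)$, and, using $\tilde{n}=\delta+1=\tfrac{n(k+1)}{2k}$, the identity $k\tilde{\kappa}-(\tilde{n}-(k+1))=b^{-1}\bigl(k\kappa-(n-2k)\bigr)$. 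Multiplying the three cases $(q-k)\tilde{\kappa}>k+1$, $=k+1$, $<k+1$ of \eqref{thetaasym} by $b$ shows that they are exactly the three cases $(q-k)\kappa>2k$, $=2k$, $<2k$ of \eqref{vasym}. Finally $ab=1$ yields $v'(r)=ab\,r^{b-1}\theta'(s)=r^{b-1}\theta'(s)$, the relation already noted in the statement.

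The first limit is then immediate: $r^{\kappa+1}v'(r)=r^{\kappa+b}\theta'(s)=b^{\tilde{\kappa}+1}s^{\tilde{\kappa}+1}\theta'(s)\to-b^{\tilde{\kappa}+1}\tilde{\kappa}\tilde{L}$ by \eqref{eq:derivainfty}, and $b^{\tilde{\kappa}+1}\tilde{\kappa}\tilde{L}=b^{\tilde{\kappa}}(b\tilde{\kappa})\tilde{L}=\kappa L$. For \eqref{vasym} I would treat each of the three cases of \eqref{thetaasym}, writing $v(r)=\theta(s)=s^{-\tilde{\kappa}}\bigl(\tilde{L}+(\cdots+o(1))s^{-\rho}\bigr)$ with $\rho\in\{\tilde{\nu},\tilde{\kappa}(q-1)\}$, and substituting $s^{-\tilde{\kappa}}=b^{\tilde{\kappa}}r^{-\kappa}$ and $s^{-\rho}=b^{\rho}r^{-b\rho}$, so that $b\rho$ becomes $\nu$ or $\kappa(q-1)$ as required. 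Collecting the powers of $b$ turns $b^{\tilde{\kappa}}\tilde{L}$ into $L$, and turns $b^{\tilde{\kappa}+\tilde{\nu}}\tilde{A}$ (resp. $b^{\tilde{\kappa}q}\tilde{B}$) into the stated coefficients $\bigl(\tfrac{2k}{k+1}\bigr)^{(k+1)(\kappa+2)/2}A$ (resp. $\bigl(\tfrac{2k}{k+1}\bigr)^{(k+1)\kappa q/(2k)}B$), using the identities of the previous paragraph together with $(\tilde{\kappa}\tilde{L})^{k}=(\kappa L)^{k}/b^{k(\tilde{\kappa}+1)}$ and $\tilde{L}^{q}=L^{q}/b^{\tilde{\kappa}q}$. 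The expansion of $v'$ follows in the same way from the term-by-term expansion of $\theta'$ in Proposition~\ref{asymbeha} via $v'(r)=r^{b-1}\theta'(s)$.

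The only real work is the constant bookkeeping in the last step, and the point to verify carefully is that the exponents of $b=\tfrac{2k}{k+1}$ produced by the substitution match the explicit powers in the statement, i.e. $\tilde{\kappa}+\tilde{\nu}=k\tilde{\kappa}+k+1=\tfrac{(k+1)(\kappa+2)}{2}$ and $\tilde{\kappa}q=\tfrac{(k+1)\kappa q}{2k}$; these are precisely the identities that make $\tilde{A},\tilde{B}$ collapse to $A,B$. There is no genuine analytic obstacle. $\qed$
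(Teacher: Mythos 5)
Your proposal is correct and is exactly the route the paper takes: Proposition \ref{asymbehav} is obtained from Proposition \ref{asymbeha} purely by unwinding the change of variables \eqref{eq:cv} (the paper states this without even writing out the bookkeeping). Your dictionary of identities ($b\tilde{\nu}=\nu$, $\tilde{\kappa}+\tilde{\nu}=\tfrac{(k+1)(\kappa+2)}{2}$, $\tilde{\kappa}q=\tfrac{(k+1)\kappa q}{2k}$, and the collapse of $\tilde{A},\tilde{B}$ to $A,B$) checks out.
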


\begin{remark} We see from Proposition \ref{asymbehav} that the first two terms in the asymptotic expansion of the slow decay solutions $v(r)$ in \eqref{vasym}, when $q>k+(2k)/\kappa$, are not influenced by the nonlinear term, but that are when $q\leq k+(2k)/\kappa$, see Figure 1 below. Notice that in the semilinear case when $k=1$, we recover the result in \cite[Theorem 1, part (ii)]{PeTeWe86}. 
 \end{remark}
 
\begin{figure}[H]\label{Fig1}
\centering
\begin{tikzpicture}
\draw[->] (0,0) --( 4,0) node[right]{\small $\kappa$};
\draw[->] (0,0) --( 0,4) node[above]{\small $q$};		
\draw[] (0,.45) -- ( 3.9,.45);
\draw (1.9,3.6) node {$q(\kappa)=k+(2k)/\kappa$};
\draw (-0.4,.8) node[anchor=north west] {\small $k$};
\draw (2.2,1.9) node {$\sim E$};
\draw (1,.7) node {$E$};
\draw (0.1,3.9) .. controls (.5,.5) and (3,.5) .. (3.9,.5);
\draw[->] (1,.9) -- ++(45:.48);
\draw[->] (1,.9) -- ++(135:.8);
\draw[->] (2.5,2.1) -- ++(45:.8);	
\end{tikzpicture}
\caption{The figure shows the exponent $q$ as a function of the parameter $\kappa$. Also shows the localization of the regions $E$ and $\sim E$ on the quadrant $(\kappa,q)$, respectively, with or without effect of the nonlinear source on the first two terms in the asymptotic representation of the profiles.}
\end{figure}
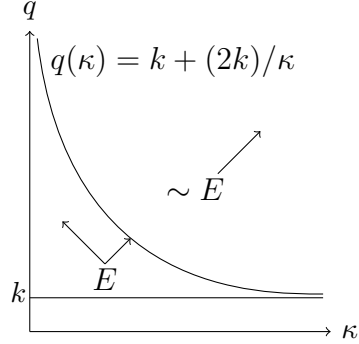

\section{Existence of nonnegative solutions}
Here we study the existence of nonnegative solutions $\theta$ of equation \eqref{eq:newgoveq}. When $\kappa<n$, we first prove the existence of slow decay solutions for $\gamma$ small enough. Recall that all the information about $\theta$ is transfer via \eqref{eq:cv} to the solution $v$ of equation \eqref{eq:goveq}.
\begin{proposition}\label{slowdecay}
Assume $\kappa<n$. Let $\underline{\gamma}>0$ be defined as in Proposition \ref{zeros}. Then for any $\gamma\in (0,\underline{\gamma}],\, \theta(s,\gamma)>0$ on $[0,\infty)$ and $\tilde{L}(\gamma)>0$.
\end{proposition}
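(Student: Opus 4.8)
The plan is to argue by contradiction, the only genuinely new point being the strict inequality $\tilde{L}(\gamma)>0$: the positivity $\theta(\cdot,\gamma)>0$ on $[0,\infty)$ for $\gamma\in(0,\underline{\gamma}]$ is already Proposition~\ref{zeros}$(i)$, and reading \eqref{eq:cv} with $a=b^{-1}$ gives $s^{\tilde{\kappa}}\theta(s)=a^{\tilde{\kappa}}r^{\kappa}v(r)$, so $\tilde{L}(\gamma)=a^{\tilde{\kappa}}L(\gamma)$ and this limit exists and is $\ge0$ by Proposition~\ref{propoinf}. The main tool is the function $J_\delta$ from \eqref{eq:jotadelta}. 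Exactly as in the proof of Proposition~\ref{zeros}$(i)$, since $0<\theta(s)\le\gamma\le\underline{\gamma}$ one has $c_{n,k}^{-1}\theta^{q-1}\le n-\kappa$, hence $J_\delta'\ge0$ on $(0,\infty)$ by \eqref{eq:jotadeltaprima}; combined with $J_\delta(0)=0$ (from \eqref{eq:jotadelta} and the behavior of $\theta'$ near $s=0$, cf.\ \eqref{thetaprimezero}) this gives $J_\delta\ge0$ on $[0,\infty)$ and a limit $\ell\in[0,\infty]$.

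Suppose now $\tilde{L}(\gamma)=0$, i.e.\ $L(\gamma)=0$. Then $v$ is a fast decay solution, so Proposition~\ref{propoinf} yields $v(r)=o(r^{-d})$ for every $d\ge0$; transporting this through \eqref{eq:cv} and Proposition~\ref{pbehainf1}$(ii)$ gives $\theta(s)=o(s^{-d})$ and $\theta'(s)=o(s^{-d})$ for every $d\ge0$. Since $J_\delta(s)=b\,s^{\delta+1}\theta(s)+s^{\delta}(\theta'(s))^k$, both summands tend to $0$, so $\ell=0$; together with $J_\delta$ nondecreasing and $J_\delta(0)=0$ this forces $J_\delta\equiv0$ on $[0,\infty)$.

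Finally I would show $J_\delta\equiv0$ is impossible. This identity is $(\theta')^k=-b\,s\,\theta$; differentiating, $((\theta')^k)'=-b\theta-b\,s\,\theta'$, and plugging both into \eqref{eq:maineq} the terms in $s\theta'$ cancel, leaving $-b(\delta+1)\theta+\kappa\theta+c_{n,k}^{-1}\theta^q=0$. As $b(\delta+1)=n$, this reads $\theta\bigl(\kappa-n+c_{n,k}^{-1}\theta^{q-1}\bigr)\equiv0$, and $\theta>0$ forces $\theta\equiv\underline{\gamma}$, hence $\theta'\equiv0$, contradicting $(\theta')^k=-b\,s\,\theta=-b\,s\,\underline{\gamma}\neq0$ for $s>0$. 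Therefore $\tilde{L}(\gamma)\neq0$, so $\tilde{L}(\gamma)>0$, and likewise $L(\gamma)=a^{-\tilde{\kappa}}\tilde{L}(\gamma)>0$.

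The hard part is really only the second paragraph: the single non-elementary input is the super-polynomial decay $v(r)=o(r^{-d})$ of fast decay solutions, which is already established in Proposition~\ref{propoinf}, so what remains is careful bookkeeping with the exponents under \eqref{eq:cv}. (When $k>1$ one can even bypass this: $\tilde{L}(\gamma)=0$ and $\theta>0$ on $[0,\infty)$ together contradict the compact-support conclusion of Proposition~\ref{cs}; the $J_\delta$-argument is kept because it also covers $k=1$.)
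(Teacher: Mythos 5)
Your proof is correct, but it takes a genuinely different route from the paper's. The paper's argument is two lines: positivity on $[0,\infty)$ is Proposition~\ref{zeros}$(i)$, and if $\tilde{L}(\gamma)=0$ then $\theta$ has compact support by Proposition~\ref{cs}, contradicting positivity. That is exactly the shortcut you relegate to your final parenthetical, and you are right to be cautious about it: Proposition~\ref{cs} is stated only for $k>1$, so the paper's proof as written does not literally cover $k=1$ (the author presumably regards that case as covered by Haraux--Weissler). Your main argument instead reuses the monotone function $J_\delta$ from Proposition~\ref{zeros}$(i)$: under $0<\theta\le\underline{\gamma}$ one has $J_\delta'\ge0$ and $J_\delta(0)=0$; the super-polynomial decay $v(r)=o(r^{-d})$ of fast solutions from Proposition~\ref{propoinf} forces $J_\delta\to0$, hence $J_\delta\equiv0$; and the resulting identity $(\theta')^k=-bs\theta$, substituted into \eqref{eq:maineq} with $b(\delta+1)=n$, collapses to $\theta(\kappa-n+c_{n,k}^{-1}\theta^{q-1})\equiv0$, which is incompatible with $(\theta')^k=-bs\theta\neq0$. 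All the steps check out (including the exponent bookkeeping $\tilde{L}=a^{\tilde{\kappa}}L$ under \eqref{eq:cv}). What your approach buys is uniform coverage of $k\ge1$ without invoking the compact-support machinery; what it costs is dependence on the $o(r^{-d})$ refinement in Proposition~\ref{propoinf}, which the paper's one-liner does not need.
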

\begin{proof}
Let $\gamma\in (0,\underline{\gamma}]$. By construction of $\underline{\gamma},\; \theta=\theta(s,\gamma)>0$ from Proposition \ref{zeros}. If $\tilde{L}(\gamma)=0$, then $\theta$ has a compact support from Proposition \ref{cs}, which is impossible.
\end{proof}
Next we consider the subcritical case $k<q<q^*(k)$ and prove the existence of fast solutions. 
\begin{theorem}\label{compa_supp}
Assume $\kappa<n$ and $k<q<q^*(k)$. Then there exists $\gamma>0$ such that $\theta(\cdot,\gamma)$ is nonnegative and such that $\tilde{L}(\gamma)=0$. If $k>1$, it has a compact support.
\end{theorem}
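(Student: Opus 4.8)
The plan is to use a shooting/continuity argument in the parameter $\gamma$, exploiting the continuity of $\gamma\mapsto \tilde L(\gamma)$ (equivalently $\gamma\mapsto L(\gamma)$, via the change of variables \eqref{eq:cv} and Remark \ref{depeconti}) together with the dichotomy already established for the two extreme regimes of $\gamma$. Concretely, I would partition $(0,\infty)$ into three sets according to the behavior of $\theta(\cdot,\gamma)$: the set $\mathcal{G}_+$ of $\gamma$ for which $\theta(\cdot,\gamma)>0$ on $[0,\infty)$ with $\tilde L(\gamma)>0$ (slow positive solutions), the set $\mathcal{G}_0$ for which $\theta(\cdot,\gamma)\geq 0$ with $\tilde L(\gamma)=0$ (fast solutions, the ones we want), and the set $\mathcal{G}_-$ of $\gamma$ for which $\theta(\cdot,\gamma)$ changes sign. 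The goal is to produce a $\gamma$ in $\mathcal{G}_0$.

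First I would show $\mathcal{G}_+$ is nonempty and, in fact, contains an interval $(0,\underline\gamma\,]$: this is exactly Proposition \ref{slowdecay}, which requires $\kappa<n$ and uses Proposition \ref{zeros}(i) for positivity and Proposition \ref{cs} (here $k>1$ is not yet needed, but the contradiction with compact support is) to rule out $\tilde L=0$. Next I would show $\mathcal{G}_-$ is nonempty: this is where the subcriticality $k<q<q^*(k)$ and the hypothesis $n>2k$ enter — by Proposition \ref{vzeros}, for $\gamma$ large enough $v(\cdot,\gamma)$ (hence $\theta(\cdot,\gamma)$) has at least one isolated zero, so large $\gamma$ lies in $\mathcal{G}_-$. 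Then I would define $\gamma^\ast=\sup\{\gamma>0:\ (0,\gamma\,]\subset \mathcal{G}_+\}$, which is finite and positive by the above, and argue that $\gamma^\ast\in\mathcal{G}_0$. The key point is that $\mathcal{G}_+$ and $\mathcal{G}_-$ are both open in $(0,\infty)$: openness of $\mathcal{G}_-$ follows because an isolated (transversal) zero persists under $C^1_{loc}$ perturbation of the solution in the data $\gamma$ (continuous dependence of $\theta(\cdot,\gamma)$ on $\gamma$, as asserted after \eqref{eq:newinicond}); openness of $\mathcal{G}_+$ is more delicate and I would derive it from the strict positivity of $\tilde L$ on $\mathcal{G}_+$ together with the equicontinuity of the family $\{(1+r)^\kappa v(r,\gamma)\}$ and continuity of $L(\gamma)$ recorded in Remark \ref{depeconti}: if $\theta(\cdot,\gamma_0)>0$ with $\tilde L(\gamma_0)>0$, then for $\gamma$ near $\gamma_0$ the rescaled solution stays uniformly bounded below away from zero near infinity, and stays positive on compact sets, so it remains everywhere positive with $\tilde L(\gamma)$ still positive. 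Since $(0,\infty)$ is connected, $\gamma^\ast$ can belong to neither open set, so $\theta(\cdot,\gamma^\ast)\geq 0$ (it is a limit of positive solutions, hence nonnegative) and $\tilde L(\gamma^\ast)=0$ (it is not in $\mathcal{G}_+$). That produces the desired nonnegative fast solution; the final sentence, that for $k>1$ it has compact support, is then immediate from Proposition \ref{cs} applied to $\theta(\cdot,\gamma^\ast)$.

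The main obstacle I anticipate is the rigorous proof that $\mathcal{G}_+$ is open, i.e. that a positive slow solution cannot degenerate to a sign-changing solution or to a fast solution under a small perturbation of $\gamma$ — equivalently, that the value $\gamma^\ast$ is not itself a positive slow solution. The dangerous scenario is that a sequence $\gamma_m\uparrow\gamma^\ast$ with $\theta(\cdot,\gamma_m)>0$ and $\tilde L(\gamma_m)>0$ could have $\tilde L(\gamma_m)\to \tilde L(\gamma^\ast)>0$, so that the limit is still a positive slow solution, contradicting maximality. To exclude this I would argue by contradiction: if $\tilde L(\gamma^\ast)>0$ and $\theta(\cdot,\gamma^\ast)>0$, then by the asymptotic expansion of Proposition \ref{asymbeha} the solution is strictly positive and bounded below by $c\,s^{-\tilde\kappa}$ near infinity, uniformly for $\gamma$ in a neighborhood of $\gamma^\ast$ by the equicontinuity in Remark \ref{depeconti}; combined with uniform positivity on compacts this forces $\theta(\cdot,\gamma)>0$ and $\tilde L(\gamma)>0$ for all $\gamma$ slightly above $\gamma^\ast$, contradicting the definition of $\gamma^\ast$. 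This is the same structure as \cite[Theorem 4.3]{BV06}, and I would follow that template, being careful that the change of variables \eqref{eq:cv} and the parallel study of $v$ and $\theta$ are invoked only where the monotone quantities ($J_\kappa$, $\mathcal{E}$, $\mathcal{V}_{\lambda,\sigma,\mu}$) behave well.
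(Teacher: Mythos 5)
Your proposal follows essentially the same route as the paper: the paper defines the set $A$ of $\gamma$ giving positive slow solutions (nonempty by Proposition \ref{slowdecay}) and the set $B$ of $\gamma$ giving solutions with an isolated zero (nonempty by Proposition \ref{vzeros}), then invokes the connectedness/openness argument of \cite[Theorem 3.9]{BV06} together with Remark \ref{depeconti} to produce a $\gamma$ in neither set, and finishes with Proposition \ref{cs}. Your write-up simply makes explicit the openness details that the paper delegates to \cite{BV06}, so it is a correct proof by the same method.
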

\begin{proof}
Let 
\[
A=\{\gamma>0: \theta(\cdot,\gamma)>0\; \mbox{on}\; (0,\infty)\; \mbox{and}\; \tilde{L}(\gamma)>0\},
\]
\[
B=\{\gamma>0: \theta(\cdot,\gamma)\; \mbox{has at least an isolated zero}\}.
\]
From Propositions \ref{slowdecay} and \ref{vzeros} (here we recall that $\theta(s,\gamma)=v(r,\gamma)$, where $r=(bs)^{\frac{1}{b}}$), $A$ and $B$ are nonempty since: $A\supset (0,\underline{\gamma}]$ and $B\supset [\overline{\gamma},\infty)$. The rest of the proof is analogous to that of Theorem 3.9 of \cite{BV06} whose proof include the Theorem 3.5 (see Remark \ref{depeconti}) of the same paper. Therefore, there exists $\gamma>0$ such that $\theta(\cdot,\gamma)$ is nonnegative and $\tilde{L}(\gamma)=0$. From Proposition \ref{cs}, $\theta$ has a compact support.
\end{proof}
\begin{remark}
In Proposition \ref{slowdecay} it was shown that if $\kappa<n$, and $q>k$ arbitrary,
any solution of problem \eqref{eq:newgoveq}-\eqref{eq:newinicond} is positive and a slow solution provided that
\[
0<\gamma<\underline{\gamma}=(c_{n,k}(n-\kappa))^{1/(q-1)}.
\]
It is interesting to note that if $\kappa=\kappa_0\in (n/2, n)$, then $\theta(0)>\underline{\gamma}$.
\end{remark}

In the supercritical case $q\geq q^{\ast}(k)$ we give sufficient conditions assuring that all the solutions are positive, and necessarily slow solutions.
\begin{proposition}\label{super_crit}
Assume that $\kappa\leq\frac{n}{2}$ and $q\geq q^{\ast}(k)$. Then for any $\gamma>0,\,\theta(s,\gamma)>0$ on $[0,\infty)$ and $\tilde{L}=\tilde{L}(\gamma)>0$.
\end{proposition}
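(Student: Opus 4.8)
The plan is to argue by contradiction: suppose for some $\gamma>0$ the solution $\theta=\theta(\cdot,\gamma)$ is \emph{not} positive on all of $[0,\infty)$, i.e.\ it has a first zero $s_1>0$. The natural tool is the function $J_\delta$ from \eqref{eq:jotadelta}, whose derivative is governed by \eqref{eq:jotadeltaprima}: $J_\delta'(s)=s^\delta(n-\kappa-c_{n,k}^{-1}\abs{\theta}^{q-1})\theta$. First I would try to show that $J_\delta'$ keeps a favorable sign on $(0,s_1)$, at least after comparing $\abs{\theta}^{q-1}$ with a suitable quantity. The difficulty is that, unlike in Proposition \ref{zeros}(i) where a clean pointwise bound $\abs{\theta}\leq\underline{\gamma}$ was available, here $\gamma$ is arbitrary and the sign of $n-\kappa-c_{n,k}^{-1}\abs{\theta}^{q-1}$ is not controlled near $s=0$. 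So the sign argument on $J_\delta$ alone will not suffice, and the hypotheses $\kappa\leq n/2$ and $q\geq q^\ast(k)$ must enter through a Pohozaev-type identity.

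The key step, therefore, is to exploit a Pohozaev function $\mathcal{V}_{\lambda,\sigma,\mu}$ from \eqref{eq:VPoho} with parameters tuned so that, in the supercritical range $q\geq q^\ast(k)$ and for $\kappa\leq n/2$, the coefficients multiplying $(\theta')^{k+1}$, $\abs{\theta}^{q+1}$ and $\theta^2$ in the formula for $s^{1-\lambda}\mathcal{V}'$ all have a definite (say nonpositive) sign, forcing $\mathcal{V}$ to be monotone. One wants to choose $\sigma=\lambda/(q+1)$ to kill the $\abs{\theta}^{q+1}$ term, then $\lambda$ so that $\delta-\sigma-k\lambda/(k+1)\geq 0$ — this is where $q\geq q^\ast(k)=(n+2)k/(n-2k)$ (equivalently a condition relating $\delta$, $k$ and $q$) is exactly what makes the inequality work — and finally pick $\mu$ to complete the square $b(s\theta'+\cdots)^2$ and arrange the residual $\theta^2$ coefficient to have the right sign using $\kappa\leq n/2$. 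Having such a monotone $\mathcal{V}$, one evaluates it at $s=0$ (using $\theta(0)=\gamma$, $\theta'(0)=0$) and at $s=s_1$ (where $\theta(s_1)=0$), and compares with the known energy behavior from Propositions \ref{tetateta'bound} and \ref{uu'vanish}: if $\theta$ had a zero, $\theta\equiv0$ beyond it, and tracking $\mathcal{V}$ (or combining it with $\mathcal{E}$) between $0$ and $s_1$ yields a contradiction with $\gamma>0$. Once positivity on $[0,\infty)$ is established, $\tilde{L}(\gamma)>0$ follows immediately: if $\tilde{L}(\gamma)=0$ then $\theta$ would be a fast decay solution, and for $k>1$ Proposition \ref{cs} would force compact support (contradicting positivity), while for $k=1$ one can rule out $\tilde{L}=0$ by a separate Pohozaev/integral argument, or by observing that the $q\geq q^\ast(k)$ regime is precisely the one where no fast solutions exist.

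I expect the main obstacle to be the bookkeeping in selecting $(\lambda,\sigma,\mu)$: verifying simultaneously that the $(\theta')^{k+1}$-coefficient is $\geq 0$, that the cross term $\sigma(\lambda-\delta-1)s^{-1}\theta(\theta')^k$ does not spoil the sign (this may require $\lambda\leq\delta+1$, which must be checked against the lower bound on $\lambda$ coming from $q\geq q^\ast(k)$), and that the final $\theta^2$-coefficient $b^{-1}\big(b\kappa\sigma-\tfrac{b\lambda\mu}{2}-\tfrac{(b\sigma-\mu+\kappa)^2}{4}\big)$ has the correct sign under $\kappa\leq n/2$. If the constraints on $\lambda$ turn out to be incompatible for all admissible $q\geq q^\ast(k)$, the fallback is to run the argument in two pieces — use $J_\delta$ to control behavior where $\abs{\theta}$ is small and a localized Pohozaev identity on the complementary set — or to invoke the already-proven asymptotic results (Proposition \ref{asymbeha}) to exclude the fast-decay alternative directly, since $\tilde{L}=0$ forces $\theta(s)=o(s^{-d})$ for all $d$, which is incompatible with the Pohozaev balance in the supercritical regime.
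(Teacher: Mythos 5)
Your proposal follows essentially the same route as the paper: assume a first zero $s_0$, use the Pohozaev function $\mathcal{V}_{\lambda,\sigma,\mu}$ with parameters tuned so that every term of $\mathcal{V}'$ is nonpositive (the hypotheses $q\geq q^\ast(k)$ and $\kappa\leq n/2$ entering exactly where you say), conclude $\mathcal{V}\equiv\mathcal{V}'\equiv 0$ on $[0,s_0]$ and hence a contradiction from the vanishing square term, and finally get $\tilde{L}>0$ from Proposition \ref{cs}. The only cosmetic difference is the parameter selection: the paper takes $\lambda=\delta+1$, $\sigma=\tfrac{n-2k}{2k}$, $\mu=b\sigma+\kappa-n$, which annihilates the $(\theta')^{k+1}$ and cross terms and leaves the $\abs{\theta}^{q+1}$ coefficient to be signed by $q\geq q^\ast(k)$, rather than killing the $\abs{\theta}^{q+1}$ term as you suggest; both tunings lead to the same conditions.
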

\begin{proof}
We use the function $\mathcal{V}=\mathcal{V}_{\lambda,\sigma,\mu}$ defined in \eqref{eq:VPoho}, where $\lambda>0,\sigma$ and $\mu$ will be chosen later. It is continuous at 0 and $\mathcal{V}_{\lambda,\sigma,\mu}(0)=0$, from \eqref{thetaprimezero}. Suppose that $\theta(s_0)=0$ for some first real $s_0>0$. Then $\mathcal{V}_{\lambda,\sigma,\mu}(s_0)=s^{\lambda}k(\theta'(s_0))^{k+1}/(k+1)\geq 0$. Suppose that for some $\lambda,\sigma,\mu$, the five terms giving $\mathcal{V}'$ are nonpositive. Then $\mathcal{V}\equiv \mathcal{V}'\equiv 0$ on $[0,s_0]$. Hence $s\theta'+(b\sigma-\mu+\kappa)/(2b)\theta\equiv 0$, that is, $s^{\frac{b\sigma-\mu+\kappa}{2b}}\theta$ is constant. Thus, $\theta\equiv 0$ if $b\sigma-\mu+\kappa\neq 0$, or $\theta\equiv\gamma$ if $b\sigma-\mu+\kappa=0$. This is impossible since $\theta(0)\neq\theta(s_0)$. Take $\lambda=\delta+1,\, \sigma=\frac{n-2k}{2k}$ and $\mu=b\sigma+\kappa-n$. Thus
\[
\mathcal{V}(s)=s^{\delta+1}\left(\frac{k}{k+1}(\theta')^{k+1}+\frac{c_{n,k}^{-1}}{q+1}\abs{\theta}^{q+1}+\left(\frac{n-2k}{k+1}+\kappa-n\right)\frac{\theta^2}{2}+\frac{n-2k}{2k} s^{-1}(\theta')^k\theta\right).
\]
After computation we find
\begin{eqnarray*}
s^{-\delta}\mathcal{V}'(s)&=&-(2kc_{n,k})^{-1}\left(n-2k-\frac{n(k+1)}{q+1}\right)
\abs{\theta}^{q+1}-\frac{n(k-1)+4k}{4(k+1)b}\left(n-2\kappa\right)\theta^2\\
&-&b\left(s\theta'+\frac{n}{2b}\theta\right)^2
\end{eqnarray*}
and all the terms are nonpositive from our assumptions, thus $\theta>0$ on $[0,\infty)$. Moreover, $\tilde{L}(\gamma)>0$, since if it were not true, $\theta$ would have a compact support by Proposition \ref{cs}.
\end{proof}
  
We close this section applying to equation \eqref{eq:rad_kappa_0} the previous results with $\kappa=\kappa_0=\frac{\alpha_0}{\beta_0}=\frac{2k}{q-k}$, and show our main result concerning type I solutions.
 
\begin{proof}[Proof of Theorem \ref{beha_type_I_sol}] One has $\kappa_0>0$ since $q>k$.
\begin{itemize}
\item [$(a)$] A rigorous proof of the existence of $v$ can be obtained by fixed point techniques, as was mentioned in Section \ref{cvef}, while its behavior follows from Proposition \ref{propoinf}.

\item [$(b)$] Condition $q_c(k)<q$ is equivalent to $\kappa_0<n$, and Proposition \ref{slowdecay} applies.

\item [$(c)$] If $q_c(k)<q<q^\ast(k)$, then Theorem \ref{compa_supp} shows the existence of fast nonnegative decay solutions $\theta$ and hence solutions $v$ with the same properties. In fact, notice that $L(\gamma)=b^{\tilde{\kappa}}\tilde{L}(\gamma)=0$. If $k=1$, the second part of the proof follows from statements $(i),\,(ii)$ and $(iii)$ in Theorem 1 of \cite{HaWe82}. Now for any $p\geq 1$, there exists $C>0$ such that for any $t>0$,
\begin{equation}\label{eq:Elep_norm}
\norm{u(t)}_p=Ct^{\frac{\frac{n}{p\kappa_0}-1}{q-1}}\norm{v}_p.
\end{equation}
If $k>1,\, v$ has a compact support by Proposition \ref{cs} and $u(t)\in L^p(\mathbb{R}^n)$. Moreover $\lim_{t\rightarrow 0}\norm{u(t)}_p=0$ whenever $p<\frac{n}{\kappa_0}$, from \eqref{eq:Elep_norm}. For fixed $\epsilon>0$, by Proposition \ref{cs}, $\theta$ then $v$ has a compact support and $\sup_{\abs{x}\geq\epsilon}\abs{u(t,x)}=0$ for $t\leq t(\epsilon)$ small enough. Hence $\lim_{t\rightarrow 0}\sup_{\abs{x}\geq\epsilon}\abs{u(t,x)}=0$ for any $\epsilon>0$.

\item [$(d)$] Here we apply Proposition \ref{super_crit}. Indeed if $q\geq q^*(k)$, then $\kappa_0\leq\frac{n-2k}{k+1}<\frac{n}{2}$.
 
\item [$(e)$] If $k<q\leq q_c(k)$, then $n\leq\kappa_0$. Hence all the nontrivial solutions $v$ change sign, from Proposition \ref{zeros}, part $(ii)$. The existence of an infinity of fast solutions (and hence compactly supported) $v$ follows from \cite[Theorem 3.6, part $ii)$]{BV06}, whose proof can easily be adapted to the present situation.
\end{itemize}
\end{proof}

\section{An explicit profile}
In this final section we give an example of the application of our general results in the previous sections. We construct a family of explicit solutions of Problem \eqref{eq:goveq}-\eqref{eq:inicond} when $q$ is supercritical and $\kappa=\frac{4}{q-1}$, but only in the case $k=1$. The special form of these solutions given by $v(r)=A(1+Br^C)^{-D}$, with $A,B,C$ and $D$ positive constants allow to balance the linear and nonlinear terms in equation \eqref{eq:goveq}, which seems impossible when $k\neq 1$. Thus, it is a very fortunate fact to be able to obtain exact solutions, at least in the semilinear case.

\begin{proposition}\label{exactsol}
Suppose $q>q^*(1)=(n+2)/(n-2)$.
Then the function
\[
v(r)=A(q,n)(1+B(q,n)\,r^2)^{-\frac{2}{q-1}}
\]
in which 
\[
A(q,n)=\left(\frac{\kappa(\kappa+2)}{n-2-\kappa}\right)^{\frac{1}{q-1}}
\]
\[
B(q,n)=(n-2-\kappa)^{-1}
\]
is an exact solution of Problem \eqref{eq:goveq}-\eqref{eq:inicond} with $k=1$ if
\[
\kappa=\frac{4}{q-1}\;\;\ \mbox{and}\;\;\; \gamma=A(q,n).
\]
\end{proposition}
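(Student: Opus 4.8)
The plan is to verify the claim by direct substitution: plug the candidate $v(r)=A(1+Br^{2})^{-D}$, with $D:=\tfrac{2}{q-1}$, into equation \eqref{eq:goveq} with $k=1$ and show that, after reduction, the left-hand side becomes a linear combination of three powers of $w:=1+Br^{2}$ whose coefficients vanish precisely when $\kappa=\tfrac{4}{q-1}$, $B=B(q,n)$ and $A=A(q,n)$, the value of $\gamma$ being forced by $v(0)$.

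First I would specialize the equation. Since $c_{n,1}=\binom{n}{1}/n=1$, dividing \eqref{eq:goveq} by $r^{n-1}$ gives, for $k=1$,
\[
v''+\frac{n-1}{r}\,v'+rv'+\kappa v+\abs{v}^{q-1}v=0\quad\mbox{in }(0,\infty).
\]
The computation is organised around two elementary identities: $\kappa=2D$ (which is exactly $\kappa=4/(q-1)$) and $Dq=D+2$; the latter yields $\abs{v}^{q-1}v=v^{q}=A^{q}w^{-D-2}$ as soon as we know $v>0$. With $w=1+Br^{2}$ one has $v'=-2ABD\,r\,w^{-D-1}$ and $v''=-2ABD\,w^{-D-1}+4AB^{2}D(D+1)\,r^{2}w^{-D-2}$.

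The core step is to insert these into the ODE and use the identity $r^{2}=(w-1)/B$ to rewrite every term using only $w^{-D}$, $w^{-D-1}$ and $w^{-D-2}$. Collecting coefficients, one expects: the coefficient of $w^{-D}$ is $A(\kappa-2D)=0$ automatically; the coefficient of $w^{-D-1}$ equals $2AD\bigl(1-B(n-2-\kappa)\bigr)$, which vanishes exactly for $B=(n-2-\kappa)^{-1}$; and the coefficient of $w^{-D-2}$ equals $A^{q}-4ABD(D+1)$, which, using $D(D+1)=\tfrac14\kappa(\kappa+2)$, vanishes exactly when $A^{q-1}=B\kappa(\kappa+2)=\kappa(\kappa+2)/(n-2-\kappa)$, i.e.\ $A=A(q,n)$. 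This establishes that $v$ solves the ODE on $(0,\infty)$ with the stated constants.

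It remains to confirm that the constants are well defined and that the initial data hold. The hypothesis $q>q^{*}(1)=(n+2)/(n-2)$ is equivalent to $q-1>4/(n-2)$, that is $\kappa=4/(q-1)<n-2$, so $n-2-\kappa>0$; since also $\kappa>0$, both $B>0$ and $A^{q-1}=\kappa(\kappa+2)/(n-2-\kappa)>0$ are meaningful, hence $A>0$. Then $w(r)=1+Br^{2}>0$ for all $r\ge 0$, so $v\in C^{\infty}([0,\infty))$, in particular $v\in C^{2}((0,\infty))\cap C^{1}([0,\infty))$, and $v(0)=A=\gamma$, $v'(0)=-2ABD\cdot 0=0$, which is \eqref{eq:inicond}. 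I do not anticipate a conceptual obstacle; the only point needing care is the algebraic collapse to three powers of $w$, for which tracking the relations $\kappa=2D$, $Dq=D+2$ and $D(D+1)=\tfrac14\kappa(\kappa+2)$ is the key.
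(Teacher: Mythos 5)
Your verification is correct: the three coefficients of $w^{-D}$, $w^{-D-1}$, $w^{-D-2}$ come out exactly as you state, and the sign conditions from $q>q^*(1)$ (equivalently $\kappa<n-2$) make $A,B>0$ so the initial data follow. The paper omits this proof as a ``lengthy but elementary computation,'' and your direct substitution is precisely that computation, carried out correctly.
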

The proof of Proposition \ref{exactsol} consists of a lengthy but elementary computation. We shall omit it. 
It is interesting to recall here that if $q=q^*(k)$ with $k$ odd, the equation
\[
S_k(D^2v)+\abs{v}^{q-1}v=0
\]
also has a family of explicit radial solutions defined on $\RR^n$ \cite{ClMM98}. It is given by 
\[
v(r)=\left[\frac{\{\binom{n}{k}\left(\frac{n-2k}{k}\mu\right)^k\}^{\frac{1}{k+1}}}{1+\mu r^2}\right]^{\frac{n-2k}{2k}},\; \mu>0.
\]
Note that, whereas $q$ is completely determined in this case, $v(0)$ can be chosen entirely arbitrarily by adjusting the parameter $\mu$. 

Observe that if $\kappa=\frac{4}{q-1}$, then
\[
L=\lim_{r\rightarrow\infty}r^\kappa v(r)=(\kappa(\kappa+2)(n-2-\kappa))^{\frac{1}{q-1}}>0.
\]
Therefore, the family of explicit solutions $v$ given in Proposition \ref{exactsol} consists of slow decay solutions. Their asymptotic behavior is given by Proposition \ref{asymbehav}, where the first case in \eqref{vasym} $(q-1)\kappa=4>2$ applies.
\medskip

\section*{\bf Acknowledgment} 

\noindent  The author is supported by ANID Fondecyt Grant Number 1221928, Chile

\bibliographystyle{plain}
\bibliographystyle{apalike}
\bibliography{kHessianbib}
\end{document}